\newtheorem{thm}{Theorem}[section]
\newtheorem{cor}[thm]{Corollary}
\newtheorem{lem}[thm]{Lemma}
\newtheorem{prop}[thm]{Proposition}
\newtheorem{prob}[thm]{Problem}
\newtheorem{rem}[thm]{Remark}
\numberwithin{equation}{section}
\renewcommand{\phi}{\varphi}
\newcommand{\R}{\mathbb{R}}
\newcommand{\D}{\mathbb{D}}
\newcommand{\eps}{\varepsilon}
\newcommand{\re}{\mbox{\rm Re\,}}
\newcommand{\im}{\mbox{\rm Im\,}}
\newcommand{\diam}{\mbox{\rm diam\,}}
\begin{document}

\title[Some Integral operators acting on $H^\infty$]
{Some integral operators acting on $H^\infty$}

\author[A. Anderson]{Austin Anderson}
\address{Department of Mathematics, University of Hawaii, Honolulu, Hawaii 96822}
\email{austina@hawaii.edu}
\author[M. Jovovic]{Mirjana Jovovic}
\address{Department of Mathematics, University of Hawaii, Honolulu, Hawaii 96822}
\email{jovovic@math.hawaii.edu}
\author[W. Smith]{Wayne Smith}
\address{Department of Mathematics, University of Hawaii, Honolulu, Hawaii 96822}
\email{wayne@math.hawaii.edu}

\thanks {}
\keywords {}
\subjclass [2000] {}

\begin{abstract}
Let $f$ and $g$ be analytic on the unit disc $\D$.  The  integral operator $T_g$ is defined by
$ T_g f(z) = \int_0^z f(t)g'(t)\,dt$,  $z\in \D$.  The problem considered is characterizing
those symbols $g$ for which $T_g$ acting on $H^\infty$,
the space of bounded analytic functions on $\D$, is bounded or compact.
When the symbol is univalent, these become questions in univalent function theory.
The corresponding problems for the companion operator, $ S_g f(z)= \int_0^z f'(t)g(t)\, dt$,
acting on $H^\infty$ are also studied.
\end{abstract}
\maketitle

\section{Introduction}

Let $\D$ denote the unit disk $\{z : |z| < 1\}$ and $H(\D)$ the set of analytic functions on $\D$.
The operator $T_g$ with symbol $g \in H(\D)$, defined by
  $$T_g f(z) = \int_0^z f(t)g'(t)\,dt, \qquad z \in \D, f \in H(\D),$$
has attracted interest as a generalized C\'esaro or Volterra operator.
For the multiplication operator
  $$M_gf(z) = f(z)g(z),$$
and the companion operator
   $$S_gf(z) = \int_0^z f'(t)g(t)\, dt, $$
integration by parts gives that
\begin{equation}\label{prod}
  M_gf(z) = f(0)g(0) + T_gf(z) + S_gf(z).
\end{equation}
For a Banach space $X \subseteq H(\D)$ and a linear operator $L_g$ on $X$, let
  $$L[X] = \{g \in H(\D): L_g \text{ is bounded on } X\},$$
and
  $$L_o[X] = \{g \in H(\D): L_g \text{ is compact on } X\}.$$

It is easily checked from the definitions that the operators $T_g$, $S_g$, and $M_g$
are all linear in the parameter $g$.  Also $M_{g_1g_2}=M_{g_1}M_{g_2}$, and
$S_{g_1g_2}=S_{g_1}S_{g_2}$.  Hence $M[X]$ and $S[X]$ are always subalgebras of $H(\D)$.
This is not the case for $T[X]$, in general.

Christian Pommerenke \cite{Pom1} first noted that $T[H^2] = \rm{BMOA}$ by the Littlewood-Paley identity for the Hardy space $H^2$, and he used this fact to give a proof of the analytic John-Nirenberg inequality.  $T[X]$ has been determined for a variety of spaces $X$, including $X = H^p$, $1 \leq p < \infty$, by Aleman and Siskakis \cite{AS}, and $X = \rm{BMOA}$ by Siskakis and Zhao \cite{SZ}.  The condition characterizing compactness of an operator is typically a little-oh version of the condition characterizing boundedness, as
for example $T_o[H^2] = \rm{VMOA}$ \cite{AS}.  The companion operator $S_g$ is easier to analyze in many situations; see Proposition \ref{S_gbdd}.

  In this paper we begin the study of $T[H^{\infty}]$ and $T_o[H^{\infty}]$, where $H^\infty$ is the
  usual Banach space of bounded holomorphic functions on $\D$ with supremmum norm.
It turns out (Proposition \ref{subinfty}) that
$T[H^{\infty}] = S[H^{\infty}]$, and hence  $T[H^{\infty}]$ is an algebra.  Moreover,
$T[H^{\infty}]\subseteq H^{\infty}$, yet this containment is proper.  Examples of bounded functions outside of $T[H^{\infty}]$ include some Blaschke products (Theorem \ref{BReal}) and some functions in the disk algebra $A$ (Proposition \ref{deathspiral}).
An obvious sufficient condition for $T_g$ to be bounded on $H^{\infty}$ is that $g$ has
bounded radial variation, but whether this is necessary is an open question.  When the symbol $g$ is univalent, a change of variables shows membership of $g$ in $T[H^{\infty}]$ becomes a question in univalent function theory; see Proposition \ref{univalent}.  Regarding compactness, we note that $S_o[H^\infty] = \{0\}$ in Proposition \ref{S_o}, but $T_o[H^{\infty}]$ is not trivial.  We show that a function with derivative in $H^1$ induces a compact operator (Proposition \ref{cptsuff})
and that $T_o[H^{\infty}]\subseteq A$ (Theorem \ref{TosubsetA}).
The space $A$ itself provides another interesting setting for $T_g$. We show that
$T[A]=T[H^\infty]\cap A$ (Proposition \ref{TAsubTH}), and
give partial results toward characterizing $T_o[A]$.  We end the paper with a section of problems and
questions suggested by our work.

\section{$T[H^{\infty}]$}

First we examine necessary conditions for boundedness of $T_g$ and $S_g$.
Lemma \ref{nec} is analogous to a result for $M_g$; see \cite[Lemma 11]{DRS}.

\begin{lem}
\label{nec}
Let $X$ and $Y$ be Banach spaces of analytic functions, $z \in \D$, and let $\lambda_z$
and $\lambda_z'$ be linear functionals
defined by $\lambda_zf = f(z)$ and $\lambda'_zf = f'(z)$ for $f \in X \cup Y$.  Suppose
$\lambda_z$ and $\lambda'_z$ are bounded on $X$ and $Y$.

\emph{(i)} If $S_g$ maps $X$ boundedly into $Y$, then
$$|g(z)| \leq \, \|  S_g  \| \,
 \frac{\|  \lambda'_z  \|_Y }{\|  \lambda'_z  \|_X}.$$

\emph{(ii)} If $T_g$ maps $X$ boundedly into $Y$, then
$$|g'(z)| \leq \, \|  T_g  \| \,
 \frac{\|  \lambda'_z  \|_Y }{\|  \lambda_z  \|_X}.$$
 \end{lem}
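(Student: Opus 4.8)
The plan is to use the fundamental theorem of calculus to reduce each inequality to the boundedness of a point-evaluation functional tested against a well-chosen function. Since $S_g f(z) = \int_0^z f'(t)g(t)\,dt$ and $T_g f(z) = \int_0^z f(t)g'(t)\,dt$, differentiating gives $(S_g f)'(z) = f'(z)g(z)$ and $(T_g f)'(z) = f(z)g'(z)$. Hence applying $\lambda'_z$ to $S_g f$ recovers $f'(z)g(z)$, and applying it to $T_g f$ recovers $f(z)g'(z)$; this is the key observation that makes both estimates fall out.

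For part (i), I would fix $z \in \D$ and note that for every $f \in X$,
$$|f'(z)|\,|g(z)| = |\lambda'_z(S_g f)| \le \|\lambda'_z\|_Y\,\|S_g f\|_Y \le \|\lambda'_z\|_Y\,\|S_g\|\,\|f\|_X.$$
Then I would invoke the definition $\|\lambda'_z\|_X = \sup\{|f'(z)| : f \in X,\ \|f\|_X \le 1\}$ to choose, for each $\eps > 0$, a function $f$ with $\|f\|_X \le 1$ and $|f'(z)| > \|\lambda'_z\|_X - \eps$. Plugging this $f$ into the displayed inequality and letting $\eps \to 0$ gives $|g(z)|\,\|\lambda'_z\|_X \le \|\lambda'_z\|_Y\,\|S_g\|$, which is the asserted bound (vacuously true if $\|\lambda'_z\|_X = 0$).

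Part (ii) follows by the identical argument with $T_g$ in place of $S_g$: from $(T_g f)'(z) = f(z)g'(z)$ one obtains $|f(z)|\,|g'(z)| \le \|\lambda'_z\|_Y\,\|T_g\|\,\|f\|_X$ for all $f \in X$, and then choosing $f$ with $\|f\|_X \le 1$ and $|f(z)|$ within $\eps$ of $\|\lambda_z\|_X$ and letting $\eps \to 0$ yields $|g'(z)|\,\|\lambda_z\|_X \le \|\lambda'_z\|_Y\,\|T_g\|$.

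I do not anticipate a serious obstacle here. The only points requiring a little care are using the definition of the functional norm (rather than assuming the extremal $f$ is attained) when selecting the test function, and observing that the stated inequalities are trivially satisfied in the degenerate cases where $\|\lambda'_z\|_X$ or $\|\lambda_z\|_X$ vanishes.
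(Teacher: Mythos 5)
Your proposal is correct and follows essentially the same route as the paper: differentiate $S_g f$ and $T_g f$, apply $\lambda'_z$, and bound via the operator norm, then optimize over the unit ball of $X$ (your $\eps$-near-extremal selection is just an explicit spelling-out of the paper's "take the supremum over $\|f\|_X=1$"). No gaps.
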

  \begin{proof}
Note that, for $f \in X$,
\begin{equation}
\label{sglambda}
|f'(z)||g(z)|  = |\lambda'_zS_g(f) | \leq \, \|  \lambda'_z  \|_{ _Y} \,  \|  S_g  \| \|  f  \|_{_X}.
\end{equation}
Since $$\sup_{\|  f  \|_X = 1} |f'(z)| = \,  \|  \lambda'_z  \|_X,$$ taking the supremum of both sides
of (\ref{sglambda}) over  $\{f \in X: \|  f  \|_{_X}  = 1\}$
gives us
  $$\|  \lambda'_z  \|_{_X} |g(z)| \leq \, \|  S_g  \| \|  \lambda'_z  \|_{_Y}.$$
Hence (i) holds.
Similarly,
  $$|f(z)||g'(z)|  = |\lambda'_zT_g(f) | \leq \, \|  \lambda'_z  \|_{_Y} \,  \|  T_g  \| \|  f  \|_{_X}.$$
Taking the supremum over $\{f \in X: \|  f  \|_{_X}  = 1\}$ , we get
  $$\|  \lambda_z  \|_{ _X} |g'(z)| \leq \|  T_g  \| \|  \lambda'_z  \|_{_Y}.$$
\end{proof}

Lemma \ref{nec} explains why, in many cases, $S_g$ is easy to analyze.

\begin{prop} \label{S_gbdd}
Let $X$ be a Banach space of analytic functions on which point evaluation of the derivative is a bounded linear functional. Then $S[X] \subseteq H^{\infty}$.
\end{prop}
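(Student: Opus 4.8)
The plan is to read the result off from the necessary condition of Lemma \ref{nec}, specialized to the case $Y = X$. Fix $z \in \D$. By hypothesis $\lambda'_z$ is a bounded linear functional on $X$, so Lemma \ref{nec}(i) with $Y = X$ applies; the factor $\|\lambda'_z\|_Y / \|\lambda'_z\|_X$ is then equal to $1$, and we obtain $|g(z)| \le \|S_g\|$. Since $z \in \D$ was arbitrary, this is exactly the assertion that $g \in H^\infty$, with the explicit bound $\|g\|_{H^\infty} \le \|S_g\|$; hence $S[X] \subseteq H^\infty$. Equivalently, one can just reproduce the two-line computation behind Lemma \ref{nec}(i): noting $(S_g f)'(z) = f'(z)g(z)$, for $f$ in the unit ball of $X$ one has $|f'(z)|\,|g(z)| = |\lambda'_z(S_g f)| \le \|\lambda'_z\|_X \|S_g\|$, and taking the supremum over such $f$ gives $\|\lambda'_z\|_X |g(z)| \le \|\lambda'_z\|_X \|S_g\|$, from which $|g(z)| \le \|S_g\|$.

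There is essentially no obstacle here; the only point that deserves a word is the (harmless) division by $\|\lambda'_z\|_X$. It is legitimate at every $z$ for which $\|\lambda'_z\|_X \ne 0$, and the set $\{z \in \D : \|\lambda'_z\|_X = 0\}$ has empty interior unless $X$ consists only of constant functions: if $f'$ vanished on an open subset of $\D$ for every $f \in X$, then every $f \in X$ would be constant, in which case $S_g \equiv 0$ for every $g$ and the statement is vacuous. Excluding that trivial case, the estimate $|g(z)| \le \|S_g\|$ holds on a dense subset of $\D$, and since $g$ is continuous on $\D$ it holds on all of $\D$. This is the mechanism behind the remark, preceding the proposition, that $S_g$ is easier to analyze than $T_g$: boundedness of $S_g$ forces $g$ to be bounded, whereas for $T_g$ the parallel estimate from Lemma \ref{nec}(ii) only controls $g'(z)$ by $\|T_g\| \,\|\lambda'_z\|_X / \|\lambda_z\|_X$, which need not keep $g$ bounded.
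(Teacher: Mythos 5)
Your proposal is correct and follows exactly the paper's route: the paper's proof is the single line ``immediate from Lemma \ref{nec}(i) with $X=Y$,'' which is precisely your argument. Your extra care about the points where $\|\lambda'_z\|_X=0$ is a harmless refinement the paper does not bother to spell out.
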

\begin{proof}
The proof is immediate from Lemma \ref{nec} (i) with $X = Y$.
\end{proof}

From \eqref{prod}, we see that when any two of $T_g$, $S_g$, and $M_g$ are bounded,
then so is the third.  However, there are a variety of ways in which the sets $T[X], S[X]$ and $M[X]$ might relate to each other.  We have noted that $\rm{BMOA} = T[H^2] \supsetneqq M[H^2] = S[H^2] = H^{\infty}$, and Proposition \ref{subinfty} is the first step in showing $H^{\infty} = M[H^{\infty}] \supsetneqq S[H^{\infty}] = T[H^{\infty}]$.  Other relationships are known to hold when the operators act on $\rm{BMOA}$,  eg., and other spaces; see \cite{And}.

\begin{prop} \label{subinfty}
$T[H^{\infty}] = S[H^{\infty}] \subseteq M[H^{\infty}] = H^{\infty}.$
\end{prop}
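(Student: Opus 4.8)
The plan is to establish the chain of equalities and inclusions in Proposition~\ref{subinfty} piece by piece, starting from the two ends and working inward. The rightmost equality $M[H^\infty]=H^\infty$ is the easiest: if $g\in H^\infty$, then $M_g f=fg$ clearly satisfies $\|M_gf\|_\infty\le\|g\|_\infty\|f\|_\infty$, so $g\in M[H^\infty]$; conversely $M[X]\subseteq H(\D)$ by definition, and testing $M_g$ on the constant function $1$ gives $g=M_g1\in H^\infty$. So I would dispose of that equality in one or two lines.

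Next I would prove the inclusion $S[H^\infty]\subseteq M[H^\infty]$. By Proposition~\ref{S_gbdd}, applied with $X=H^\infty$ (on which point evaluation of the derivative is bounded, since $|f'(z)|\le\|f\|_\infty/(1-|z|^2)$ by the Schwarz--Pick-type estimate, or just Cauchy's formula), we get $S[H^\infty]\subseteq H^\infty=M[H^\infty]$. That handles the right half of the statement.

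The heart of the proposition is the equality $T[H^\infty]=S[H^\infty]$. For this I would use the integration-by-parts identity~\eqref{prod}, $M_gf=f(0)g(0)+T_gf+S_gf$. First, if $g\in S[H^\infty]$, then $g\in H^\infty$ by the previous paragraph, so $M_g$ is bounded on $H^\infty$; since $M_g$ and $S_g$ are both bounded and the rank-one map $f\mapsto f(0)g(0)$ is obviously bounded, identity~\eqref{prod} forces $T_g=M_g-S_g-f(0)g(0)$ to be bounded on $H^\infty$, i.e.\ $g\in T[H^\infty]$. This gives $S[H^\infty]\subseteq T[H^\infty]$. For the reverse inclusion, suppose $g\in T[H^\infty]$. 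The subtlety is that I cannot immediately invoke~\eqref{prod} to conclude $M_g$ is bounded, because that would require knowing $g\in H^\infty$ first; so I would instead show directly that $T[H^\infty]\subseteq H^\infty$. Apply Lemma~\ref{nec}(ii) with $X=Y=H^\infty$: writing $\|\lambda_z\|_{H^\infty}=1$ and $\|\lambda_z'\|_{H^\infty}\asymp 1/(1-|z|^2)$, we obtain $|g'(z)|\le C\|T_g\|/(1-|z|^2)$, which only shows $g$ is a Bloch function, not that $g\in H^\infty$. So Lemma~\ref{nec} alone is not enough here, and the real argument must use the structure of $T_g$ more carefully: one tests $T_g$ on cleverly chosen unit-norm functions in $H^\infty$ (for instance suitable finite Blaschke products, or functions of the form $(1-\bar{a}z)^{-1}$ suitably normalized, or lacunary-type test functions) to extract a bound on $g$ itself rather than $g'$. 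Once $g\in H^\infty$ is known, $M_g$ is bounded and~\eqref{prod} again gives $S_g=M_g-T_g-f(0)g(0)$ bounded, so $g\in S[H^\infty]$, completing the circle.

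The main obstacle I anticipate is precisely the step showing $T[H^\infty]\subseteq H^\infty$: unlike the $S_g$ and $M_g$ cases, boundedness of $T_g$ does not transparently pin down $g$ pointwise, and Lemma~\ref{nec}(ii) only controls $g'$ with a Bloch-type weight. The resolution will require a genuine test-function argument --- choosing a family $\{f_a\}$ in the unit ball of $H^\infty$ for which $\int_0^z f_a(t)g'(t)\,dt$ reconstructs (a constant multiple of, plus controlled error) the value $g(a)$, so that $\sup_a|g(a)|\le C\|T_g\|<\infty$. I would expect the paper to use a conformal automorphism $\varphi_a(z)=(a-z)/(1-\bar a z)$ and consider $f_a=\varphi_a$ or $f_a=(1-|a|^2)/(1-\bar a z)$, compute $T_g f_a$ explicitly, and estimate. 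Everything else in the proposition is a short formal consequence of~\eqref{prod}, Proposition~\ref{S_gbdd}, and the triviality of $M[H^\infty]=H^\infty$.
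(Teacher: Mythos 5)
Your overall architecture matches the paper's: dispose of $M[H^\infty]=H^\infty$ trivially, get $S[H^\infty]\subseteq H^\infty$ from Proposition~\ref{S_gbdd} with the norm of $\lambda_z'$ on $H^\infty$, and then transfer boundedness among $T_g$, $S_g$, $M_g$ via identity~\eqref{prod} once all three symbols are known to lie in $H^\infty$. That part is all correct.

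The one substantive step you leave open --- showing $T[H^\infty]\subseteq H^\infty$ --- is where you overcomplicate and stop short. You correctly observe that Lemma~\ref{nec}(ii) only yields a Bloch-type bound $|g'(z)|\lesssim \|T_g\|/(1-|z|^2)$ and that a test-function argument is needed, but you then speculate about Blaschke products, normalized Cauchy kernels, and families $\{f_a\}$ reconstructing $g(a)$ ``up to controlled error,'' without actually producing one. The required test function is just the constant function $1$: since
\begin{equation*}
T_g 1(z)=\int_0^z g'(t)\,dt = g(z)-g(0),
\end{equation*}
boundedness of $T_g$ gives $\|g-g(0)\|_\infty=\|T_g1\|_\infty\le\|T_g\|$, hence $g\in H^\infty$ immediately. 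This is exactly what the paper does. So the obstacle you anticipate does not exist, and as written your proof is incomplete at precisely the one non-formal step; the fix is the one-line computation above, after which your use of~\eqref{prod} closes the argument as you describe.
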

\begin{proof}
It is well known that
  $$\|\lambda'_z\|_{H^{\infty}} = \frac{1}{1 - |z|^2};$$
see for example \cite[Proposition 5.1]{Zh}.  Thus
Proposition \ref{S_gbdd} gives that $S[H^{\infty}] \subseteq H^{\infty}$.
Letting $1 \in H^{\infty}$ denote the constant function,
  $$\|g-g(0)\|_{\infty} = \|T_g 1\|_{\infty} \leq \|T_g\|_{H^{\infty}}.$$
Thus $T[H^{\infty}] \subseteq H^{\infty}$.
Combined with the fact $M[H^{\infty}] = H^{\infty}$, the result follows from (\ref{prod}).
\end{proof}

Examples will be given in Theorem \ref{BReal} and Proposition \ref{deathspiral} showing the inclusion in Proposition \ref{subinfty} is proper.  First, we shall give a sufficient condition for $g$ to be in $T[H^\infty]$.


For $0 \leq \theta < 2\pi$, we denote the radial variation of a function $g \in H(\D)$ by
  $$V(g,\theta) =  \int_0^1 |g'(te^{i\theta})|\, dt.$$
We consider the class of analytic functions on the disk with bounded radial variation, defining
  $$\rm{BRV} = \{g \in H(\D) : \sup_{\theta} V(g, \theta) < \infty \}.$$
It is clear that $\rm{BRV} \subseteq T[H^{\infty}]$, since
\begin{align*}
  \| T_g f\|_{\infty} &= \sup_{z \in \D} \left| \int_0^z f(w)g'(w) \, dw \right| \\
  &= \sup_{\theta} \left| \int_0^1 f(te^{i\theta}) g'(te^{i\theta})e^{i\theta} \, dt \right| \\
  &\leq \sup_{\theta} V(g,\theta) \|f\|_{\infty}.
\end{align*}

We formulate this observation as a proposition for later reference.

\begin{prop}\label{ubrv}
$\rm{BRV} \subseteq T[H^{\infty}]$.
\end{prop}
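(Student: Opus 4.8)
The statement $\mathrm{BRV} \subseteq T[H^\infty]$ is exactly the observation carried out in the computation immediately preceding it in the excerpt, so the "proof" is really just a matter of organizing that estimate cleanly. The plan is to fix $g \in \mathrm{BRV}$, fix $f \in H^\infty$, and estimate $\|T_g f\|_\infty$ directly from the definition of the operator.

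First I would write, for an arbitrary $z \in \D$, the value $T_g f(z) = \int_0^z f(w)g'(w)\,dw$ as a line integral along the radial segment from $0$ to $z$; since $T_g f$ is analytic and $\D$ is simply connected, the integral is path-independent, so using the radial path is legitimate. Writing $z = re^{i\theta}$ and parametrizing $w = te^{i\theta}$, $0 \le t \le r$, gives $T_g f(re^{i\theta}) = \int_0^r f(te^{i\theta})g'(te^{i\theta})e^{i\theta}\,dt$. Then I would apply the triangle inequality for integrals together with the trivial bounds $|f(te^{i\theta})| \le \|f\|_\infty$ and $|e^{i\theta}| = 1$, extend the range of integration from $[0,r]$ to $[0,1]$ (the integrand is nonnegative so this only increases the bound), and recognize the remaining integral $\int_0^1 |g'(te^{i\theta})|\,dt$ as $V(g,\theta)$. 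This yields $|T_g f(re^{i\theta})| \le V(g,\theta)\|f\|_\infty \le \big(\sup_\theta V(g,\theta)\big)\|f\|_\infty$.

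Taking the supremum over all $z = re^{i\theta} \in \D$ gives $\|T_g f\|_\infty \le \big(\sup_\theta V(g,\theta)\big)\|f\|_\infty$, and since $g \in \mathrm{BRV}$ the constant $\sup_\theta V(g,\theta)$ is finite; hence $T_g$ is bounded on $H^\infty$ with $\|T_g\|_{H^\infty} \le \sup_\theta V(g,\theta)$, i.e. $g \in T[H^\infty]$. There is essentially no obstacle here: the only point requiring a word of care is the justification that the integral defining $T_g f(z)$ may be taken over the radial segment — but this is immediate from path-independence of integrals of analytic functions on the simply connected domain $\D$, or alternatively from the fact that $T_g f$ is by definition the antiderivative of $f g'$ vanishing at $0$, evaluated along any path. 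One could also remark that the inclusion is in fact proper, but that is not needed for the statement and would be deferred.
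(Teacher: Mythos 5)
Your proposal is correct and follows exactly the same route as the paper: integrate along the radial segment to $z=re^{i\theta}$, apply the triangle inequality, and bound the result by $\sup_\theta V(g,\theta)\,\|f\|_\infty$. No issues.
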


It is natural to ask if the inclusion in this proposition is actually an equality.
This question will be formally posed, along with an equivalent formulation for the
case when $g$ is univalent, in \S 4.
All our results are aligned with a positive answer.
In particular, see Proposition \ref{BStolz} and Theorem \ref{BReal}, which concern Blaschke products; or
see Proposition \ref{deathspiral} and Remark \ref{deathspiral2}, which concern univalent functions.

A Blaschke product $B$ with zero sequence $\{a_k\} \subseteq\D - \{0\}$ is given by
  $$B(z) =  \prod_k \frac{|a_k|}{a_k}  \frac{a_k - z}{1 - \overline a_k z}, \qquad z \in \D.$$
We call $B$ \textit{uniformly Frostman} if its zeros satisfy
\begin{equation} \label{UF}
  \sum_k \frac{1-|a_k|}{|a_k - e^{i\theta}|} < C  \text{ for some } C \text{ independent of } \theta.
\end{equation}
The terminology we give comes from a condition used by Frostman to analyze the radial limits of $B$, and Cargo \cite{Car}
used (\ref{UF}) to characterize when all subproducts of $B$ are in $\rm{BRV}$.  A corollary to
\cite[Theorem 1]{Car} is that uniformly Frostman Blaschke products are in $\rm{BRV}$.  The condition (\ref{UF}) forces the sequence $\{a_n\}$ to approach the unit circle tangentially.  In contrast, an interpolating Blaschke product with zeros in a nontangential approach region is not in $\rm{BRV}$, as we show in Proposition \ref{BStolz}.


Denote by $B_k$ the Blaschke product with the $k^\text{th}$ zero removed from the
Blaschke sequence, so that
\begin{align*}
  B_k(z)=B(z)\frac{a_k}{|a_k|}\frac{1-\overline a_k z}{a_k-z}.
\end{align*}
Also, denote by $\rho$ the pseudo-hyperbolic metric on $\D$, so
$$
\rho(z, w) = \left | \frac{z - w}{1 - \overline{w} z} \right|,\quad z,w\in\D,
$$
and recall \cite[Chap. VII]{Gar}  the following facts about interpolating sequences for $H^{\infty}$:

\begin{prop}\label{is}
The following conditions are equivalent:

(1)  $\{a_k\}$ is an interpolating sequence;

(2) There exists $\delta>0$ such that
\begin{equation}\label{interpseq}
    |B_k(a_k)|\ge \delta,\quad k\ge 1;
\end{equation}

(3) The points $a_k$ are separated, i.e. there exists $a > 0$ such that
\begin{equation*}
    \rho(a_j, a_k)\ge a,\quad j\neq k;
\end{equation*}
and ${\mu_{\{a_k\}} = \sum_k (1 - |a_k|) \delta_{a_k}}$ is a Carleson measure on the disk.

\end{prop}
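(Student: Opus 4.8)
The statement to prove is Proposition~\ref{is}, the standard characterization of interpolating sequences for $H^\infty$. This is Carleson's interpolation theorem together with the Garnett-style reformulations, so the "proof" is really a matter of citing and assembling known results rather than building anything from scratch; the excerpt itself points to \cite[Chap.~VII]{Gar}.

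\medskip

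The plan is to organize the argument as a cycle of implications, say $(1)\Rightarrow(2)\Rightarrow(3)\Rightarrow(1)$, and to lean on Carleson's theorem for the one substantive step. First I would record the definition of an interpolating sequence: $\{a_k\}$ is interpolating if for every bounded sequence $\{w_k\}$ there is $f\in H^\infty$ with $f(a_k)=w_k$ for all $k$. For $(1)\Rightarrow(2)$, given the interpolating property one produces, for each fixed $j$, a function $f_j\in H^\infty$ with $\|f_j\|_\infty\le M$ (the interpolation constant), $f_j(a_j)=1$, and $f_j(a_k)=0$ for $k\ne j$; dividing out the Blaschke factors vanishing at the $a_k$, $k\ne j$, gives $|B_j(a_j)|\ge 1/M=:\delta$, which is \eqref{interpseq}. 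The implication $(2)\Rightarrow(3)$ is elementary: $|B_j(a_j)|\ge\delta$ forces $\rho(a_j,a_k)\ge\delta$ for each $k\ne j$ (a single factor dominates the product), giving separation with $a=\delta$; and then an estimate of $\prod_{k\ne j}\rho(a_j,a_k)$ from below by $\delta$, combined with $\log(1/x)\asymp 1-x$ near $1$, yields $\sum_{k\ne j}(1-|a_k|^2)(1-|a_j|^2)/|1-\overline{a_k}a_j|^2\le C$, which is exactly the (simplest form of the) Carleson measure condition on $\mu_{\{a_k\}}$ tested against the points of the sequence; standard arguments upgrade this to the full Carleson condition. The step $(3)\Rightarrow(1)$ is Carleson's theorem: separation plus $\mu_{\{a_k\}}$ Carleson implies the sequence is interpolating. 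I would simply invoke \cite[Chap.~VII, Theorems~1.1 and~2.2]{Gar} (or equivalent numbering) here rather than reproduce the proof, which uses either duality with the corona-type estimate or an explicit construction of the interpolating functions via Blaschke products and a $\bar\partial$-correction.

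\medskip

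The main obstacle, and the only place where real work lives, is $(3)\Rightarrow(1)$, Carleson's theorem itself; everything else is bookkeeping with Blaschke products and the pseudo-hyperbolic metric. Since the excerpt explicitly says we may "recall \cite[Chap.~VII]{Gar} the following facts," the honest and economical course is to present Proposition~\ref{is} as a citation: state that all three conditions are equivalent by Carleson's interpolation theorem and its standard reformulations, and refer the reader to \cite[Chap.~VII]{Gar} for the proof, perhaps sketching only the easy implications $(1)\Rightarrow(2)\Rightarrow(3)$ for the reader's convenience. A representative write-up:

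\begin{proof}
This is Carleson's interpolation theorem together with its standard equivalent formulations; see \cite[Chap.~VII]{Gar}. We indicate only the easy implications. If $\{a_k\}$ is interpolating with constant $M$, choose for each $j$ a function $f_j\in H^\infty$, $\|f_j\|_\infty\le M$, with $f_j(a_j)=1$ and $f_j(a_k)=0$ for $k\ne j$; then $f_j/B_j$ is analytic and bounded by $M$ on $\D$, so $|B_j(a_j)|=1/|(f_j/B_j)(a_j)|\ge 1/M$, giving $(2)$ with $\delta=1/M$. Given $(2)$, each factor $\rho(a_j,a_k)$, $k\ne j$, appears in the product $|B_j(a_j)|$ with the remaining factors at most $1$, so $\rho(a_j,a_k)\ge\delta$ and the points are separated; a further estimate of the product $\prod_{k\ne j}\rho(a_j,a_k)\ge\delta$ shows $\mu_{\{a_k\}}$ satisfies the Carleson condition on the sequence, and hence, by separation, is a Carleson measure, which is $(3)$. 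The remaining implication $(3)\Rightarrow(1)$ is the substantive part of Carleson's theorem; we refer to \cite[Chap.~VII, \S 1--2]{Gar}.
\end{proof}
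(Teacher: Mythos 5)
The paper offers no proof of this proposition at all: it is stated as a recalled fact from Garnett, Chapter VII, exactly as your write-up ultimately treats it. Your citation-based proof, with correct sketches of the easy implications $(1)\Rightarrow(2)\Rightarrow(3)$ and a reference to Carleson's theorem for $(3)\Rightarrow(1)$, is consistent with the paper's approach and is fine as written.
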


Recall that $\mu$ is a Carleson measure on the disc  if there exists a constant $C$ such that  $\mu(S_{I}) \leq  C | I |$, for every Carleson square $\displaystyle{S_{I} = \{r e^{i \theta}: e^{i \theta} \in I, 1 - \frac{|I|}{2 \pi} \leq r < 1 \}}$.


\begin{lem}\label{NTL}
If $f\in$ \rm{BRV}, then $f$ has a nontangential limit at every point of the unit circle.
\end{lem}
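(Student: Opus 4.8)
The plan is to establish first that every $f\in\rm{BRV}$ is bounded on $\D$ and has a radial limit at each point of $\partial\D$, and then to invoke the classical theorem of Lindel\"of to promote these radial limits to nontangential limits.

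For the boundedness, set $M=\sup_\theta V(f,\theta)<\infty$. Since $\frac{d}{dt}f(te^{i\theta})=f'(te^{i\theta})e^{i\theta}$, for every $re^{i\theta}\in\D$ we have
\[
|f(re^{i\theta})|=\Bigl|f(0)+\int_0^r f'(te^{i\theta})e^{i\theta}\,dt\Bigr|\le |f(0)|+V(f,\theta)\le |f(0)|+M,
\]
so $f\in H^\infty$. For the radial limits, fix $\theta_0$ and apply the Cauchy criterion to $r\mapsto\int_0^r f'(te^{i\theta_0})e^{i\theta_0}\,dt$: for $r_1<r_2$ the increment is bounded in modulus by $\int_{r_1}^{r_2}|f'(te^{i\theta_0})|\,dt$, which tends to $0$ as $r_1,r_2\to 1^-$ because $\int_0^1|f'(te^{i\theta_0})|\,dt=V(f,\theta_0)<\infty$. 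Hence $\lim_{r\to1^-}f(re^{i\theta_0})$ exists; denote it by $L$.

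It remains to observe that $f$ is a bounded holomorphic function on $\D$ that tends to $L$ along the radius terminating at $e^{i\theta_0}$, an arc in $\D$ ending at that point. By Lindel\"of's theorem (see, e.g., \cite{Gar}), $f$ then has nontangential limit $L$ at $e^{i\theta_0}$; since $e^{i\theta_0}$ was arbitrary, the lemma follows. The only delicate point is the appeal to Lindel\"of's theorem, i.e. the passage from a radial to a nontangential limit; everything else is elementary. One could instead attempt a direct argument, writing $f(z)-f(|z|e^{i\theta_0})$ as the integral of $f'$ along a circular arc of length comparable to $1-|z|$ and estimating it via Fubini's theorem, but without additional input this only produces convergence along a subsequence of radii, so the use of Lindel\"of's theorem (equivalently, of the boundedness of $f$) seems essential.
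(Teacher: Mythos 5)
Your proof is correct and follows essentially the same route as the paper: show that $f\in\rm{BRV}$ is bounded and has radial limits everywhere (which you spell out via the Cauchy criterion applied to $\int_0^r f'(te^{i\theta})e^{i\theta}\,dt$), then upgrade to nontangential limits by Lindel\"of's theorem, exactly the ``well known fact'' the paper cites. No gaps.
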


\begin{proof}
Clearly functions in \rm{BRV}  have radial limits at all points on the unit circle, and also
are in $H^\infty$.  So the result follows from the well known fact that a function in
$H^\infty$ with a radial limit at $e^{i\theta}$ has a nontangential limit at $e^{i\theta}$;
see \cite[Exercise 14, Chapter 14]{Rud}.
\end{proof}

\begin{prop} \label{BStolz}
If $B$ is an interpolating Blaschke product with zero sequence $\{a_k\}$ contained in
a nontangential approach region, then $B\notin\,$\rm{BRV}.
\end{prop}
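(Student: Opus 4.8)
The plan is to show that $B$ has no radial limit at the boundary point toward which its zeros accumulate, and then to invoke Lemma~\ref{NTL}. After a rotation (which affects neither membership in $\rm{BRV}$ nor the property of being an interpolating Blaschke product) we may assume the nontangential approach region is a Stolz angle $\Gamma$ with vertex at $1$, so that every $a_k$ lies in $\Gamma$; we also assume, as the statement intends, that there are infinitely many $a_k$, since a finite Blaschke product lies in $\rm{BRV}$. Because $|a_k|\to 1$ by the Blaschke condition and $\overline{\Gamma}$ meets the unit circle only at $1$, it follows that $a_k\to 1$, and this convergence is nontangential.

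Now suppose, toward a contradiction, that $B\in\rm{BRV}$. By Lemma~\ref{NTL} the radial limit $L=\lim_{r\to 1^-}B(r)$ exists. A bounded analytic function with a radial limit at a boundary point has a nontangential limit there equal to it (the fact used in the proof of Lemma~\ref{NTL}), so $B(z)\to L$ as $z\to 1$ within $\Gamma$; in particular $0=B(a_k)\to L$, so $L=0$. I would then derive a contradiction by proving that $\limsup_{r\to 1^-}|B(r)|>0$, i.e. that $|B|$ does not decay to zero along the radius toward $1$ in spite of the crowding of the zeros.

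This last estimate is where I expect the main difficulty, and where the interpolating hypothesis is used in full. Separation of $\{a_k\}$ in the pseudohyperbolic metric, together with $a_k\in\Gamma$, bounds the number of $a_k$ in each dyadic annulus $\{1-2^{-m-1}\le |z|<1-2^{-m}\}$ by a constant $N$ depending only on $\Gamma$. Only zeros lying in a bounded number of annuli near the $m$-th one can come within a fixed pseudohyperbolic distance $\eps$ of the segment $[1-2^{-m},1-2^{-m-1}]\subset[0,1)$, and each such zero excludes a sub-arc of that segment of hyperbolic length at most $2\,\mathrm{arctanh}\,\eps$, while the segment itself has hyperbolic length bounded below independently of $m$; hence, for $\eps$ small and every large $m$, there is a point $r_m$ in that segment with $\rho(r_m,a_k)\ge \eps$ for all $k$. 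At such a point $|B|$ is bounded below: this is the standard fact that an interpolating Blaschke product satisfies $\inf\{\,|B(z)|:\rho(z,a_k)\ge \eps \text{ for all }k\,\}=c(\eps)>0$, which follows from the Carleson measure condition of Proposition~\ref{is} (giving $\sum_k(1-\rho(z,a_k)^2)\le C$ uniformly in $z$) together with the concavity of $\log$. Thus $|B(r_m)|\ge c(\eps)$ with $r_m\to 1$, contradicting $L=0$ and completing the proof. The remaining ingredients — the rotation normalization, the nontangential convergence $a_k\to 1$, and the identification $L=0$ — are routine; the substantive content is the lower bound $\limsup_{r\to1^-}|B(r)|>0$, which rests on separation (with the Stolz condition) to produce radial points staying a fixed pseudohyperbolic distance from all the zeros, and on the Carleson condition to keep $|B|$ away from $0$ at those points.
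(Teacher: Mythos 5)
Your proposal is correct, and it shares the paper's skeleton (reduce to the failure of a nontangential limit at $1$ and invoke Lemma~\ref{NTL}) but runs the key lower bound through a different engine. The paper produces points where $|B|$ is bounded below by applying Cauchy's formula to $B$ on the circles $\gamma_k$ of radius $(1-|a_k|)/2$ centered at the zeros: since $B'(a_k)=-B_k(a_k)/(1-|a_k|^2)$ and $|B_k(a_k)|\ge\delta$ by the interpolating hypothesis, each $\gamma_k$ carries a point with $|B|\ge\delta/4$, and these points lie in a slightly larger Stolz angle, so the nontangential limit cannot be $0$. You instead use separation together with the Stolz-angle geometry to count zeros per dyadic annulus, locate points $r_m$ on the radius itself at pseudohyperbolic distance $\ge\eps$ from every zero, and then invoke the uniform lower bound for interpolating Blaschke products off the pseudohyperbolic neighborhoods of their zeros --- which is exactly Lemma~\ref{EBP} of the paper, so you could cite it rather than re-derive it from the Carleson condition. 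Your route is somewhat longer and leans on more machinery, but it yields the marginally stronger conclusion that $\limsup_{r\to1^-}|B(r)|>0$ along the radius, whereas the paper's Cauchy-formula trick is shorter and self-contained for this proposition. All the individual steps you sketch (the rotation, $a_k\to1$ nontangentially, the identification of the putative limit as $0$, the annulus count, and the exclusion-of-intervals argument) are sound.
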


\begin{proof}
Assume without loss of generality that the zeros of $B$ are contained in a nontangential approach region
with vertex at 1.
Denote by $\gamma_k$ the circle with center $a_k$ and
radius $(1-|a_k|)/2$, oriented counterclockwise.  Then, by Cauchy's formula,
\begin{equation*}
    |B'(a_k)|=\frac1{2\pi}\left|\int_{\gamma_k}\frac{B(z)}{(z-a_k)^2}\, dz\right|
    \le \frac{2M_k}{(1-|a_k|)},
\end{equation*}
where $M_k=\sup\{|B(z)|\,:\, z\in \gamma_k\}$.  Since $B'(a_k)=-B_k(a_k)/(1-|a_k|^2)$,
it follows from (\ref{interpseq}) that
\begin{equation*}
    M_k\ge \frac{\delta}4.
\end{equation*}
Denote by $\Gamma$ a nontangential approach region with vertex at 1 and large enough so that
$\gamma_k\subseteq\Gamma$ for all $k$.  Then
\begin{equation*}
    \limsup_{\Gamma \backepsilon \,z\to 1} |B(z)|\ge \frac{\delta}4>0.
\end{equation*}
Since the zeros of $B$ are in $\Gamma$ and converge to 1, it follows that $B$ does not have a nontangential limit
at 1 and hence $B\notin\,$\rm{BRV} by Lemma \ref{NTL}.
\end{proof}



Next we will show that  the interpolating Blaschke products with real zeros induce unbounded integral operators on $H^{\infty}$. Let
$\displaystyle{ \Delta(a, r) = \{ z \in \D: \rho(a,z) < r \}}$  denote the
pseudo-hyperbolic disc of radius $r$ centered at $a$.  We will need the following lemma.

\begin{lem}\label{EBP}
Let  $B$ be an interpolating Blaschke product with zero sequence $\{a_k\}$.  For every $\epsilon > 0$  there exists $\gamma > 0$ such that  $|B(z)| \geq \gamma $  for all
$z \in \D\setminus\bigcup_{k} \Delta(a_k, \epsilon)$.
\end{lem}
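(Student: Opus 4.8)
The plan is to exploit the interpolating condition in the form of Proposition~\ref{is}(2)--(3), together with the standard estimate for products of pseudo-hyperbolic distances. Fix $\epsilon>0$ and let $z\in\D\setminus\bigcup_k\Delta(a_k,\epsilon)$, so $\rho(z,a_k)\ge\epsilon$ for every $k$. I would split the Blaschke factors into a ``near'' part and a ``far'' part. Since $\{a_k\}$ is separated, say $\rho(a_j,a_k)\ge a$ for $j\ne k$, the pseudo-hyperbolic discs $\Delta(a_k,a/3)$ are pairwise disjoint; hence at most finitely many of them — in fact a number $N=N(a)$ bounded independently of $z$, by a packing argument in the hyperbolic metric — can meet the disc $\Delta(z, 1/2)$. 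Call the corresponding indices the ``near'' ones.

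For a single near factor, $\rho(z,a_k)\ge\epsilon$ gives $\left|\dfrac{a_k-z}{1-\overline a_k z}\right|\ge\epsilon$, so the product of the (at most $N$) near factors is bounded below by $\epsilon^N$. For the ``far'' factors, where $\rho(z,a_k)\ge c$ for a fixed $c=c(a)>0$ coming from the separation, I would compare $|B(z)|$ with $|B_k(a_k)|$. The key inequality is the Lipschitz-type estimate for Blaschke products: if $\rho(z,a_k)$ is bounded away from $0$ uniformly over the far indices, then
$$
\prod_{k\ \mathrm{far}}\rho(z,a_k)\ \ge\ C_1\prod_{k}\rho(a_{k_0},a_k)\ \ge\ C_1\,\delta
$$
for a suitable reference index, using that $\prod_{k\ne j}\rho(a_j,a_k)=|B_j(a_j)|\ge\delta$ by~\eqref{interpseq} and that moving the base point from some $a_{k_0}$ to $z$ along a bounded pseudo-hyperbolic distance changes each factor, and hence the whole product, by at most a fixed multiplicative constant (this is the quantitative statement that the map $w\mapsto |B(w)|$ does not decay faster than a power of the hyperbolic displacement, a consequence of the Carleson-measure / Schur-test bound on $\sum_k(1-\rho(z,a_k))$ that underlies Proposition~\ref{is}). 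Multiplying the near and far bounds yields $|B(z)|\ge\gamma$ with $\gamma=\gamma(\epsilon,\delta,a)>0$, as required.

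The main obstacle is making the ``far factor'' comparison rigorous and uniform in $z$: one must show that for $z$ at bounded pseudo-hyperbolic distance from the zero set the tail product $\prod_k\rho(z,a_k)$ is comparable to $|B_k(a_k)|$, which amounts to controlling $\sum_k\bigl|\log\rho(z,a_k)-\log\rho(a_j,a_k)\bigr|$ uniformly. I would handle this by the familiar inequality $1-\rho(z,a_k)\asymp \dfrac{(1-|z|)(1-|a_k|)}{|1-\overline a_k z|^2}$ and the fact that, because $\mu_{\{a_k\}}$ is a Carleson measure (Proposition~\ref{is}(3)), the sum $\sum_k\dfrac{(1-|z|)(1-|a_k|)}{|1-\overline a_k z|^2}$ is bounded by a constant independent of $z$; comparing the summands at $z$ and at a nearby $a_j$ costs only a bounded factor since $|1-\overline a_k z|\asymp|1-\overline a_k a_j|$ when $\rho(z,a_j)$ is bounded. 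Once this uniform bound on the log-sum is in hand, exponentiating gives the comparison and the proof concludes. (Alternatively, one can cite the well-known fact that an interpolating Blaschke product is bounded below off pseudo-hyperbolic neighborhoods of its zeros — e.g.\ from the characterization of the zero set as a ``uniformly separated'' sequence in~\cite[Chap.~VII]{Gar} — but I would prefer to give the self-contained argument above.)
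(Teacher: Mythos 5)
Your proposal is correct in substance, but it takes a genuinely different route from the paper's. The paper argues by contradiction: if $|B(b_j)|\to 0$ along a sequence $\{b_j\}$ staying pseudo-hyperbolically $\epsilon$-away from the zeros, it prunes $\{b_j\}$ to an exponential (hence interpolating) sequence, observes that $\{a_k\}\cup\{b_j\}$ is then itself interpolating because the two sequences are uniformly separated from each other, and reads off $|B(b_j)|\ge|\widetilde{B_j}(b_j)|\ge\delta$ from condition \eqref{interpseq} for the enlarged Blaschke product --- a contradiction. That proof is short and outsources the analysis to the cited facts about unions of interpolating sequences, but it produces no explicit lower bound. Your proof is direct and quantitative, and its real engine is the uniform estimate $\sum_k(1-\rho(z,a_k)^2)=\sum_k(1-|z|^2)(1-|a_k|^2)/|1-\overline{a_k}z|^2\le C$ supplied by the Carleson-measure half of Proposition \ref{is}(3). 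Two remarks. First, once you have that estimate you need neither the near/far decomposition nor the comparison with $|B_{k_0}(a_{k_0})|$: the hypothesis gives $\rho(z,a_k)\ge\epsilon$ for every $k$, and since $-\log t$ is convex and hence lies below its chord on $[\epsilon^2,1]$, we get $-\log t\le \frac{\log(1/\epsilon^2)}{1-\epsilon^2}(1-t)$ there; applying this with $t=\rho(z,a_k)^2$ and summing yields $-\log|B(z)|^2\le C_\epsilon\, C$ at once, i.e.\ $\gamma=e^{-C_\epsilon C/2}$. Second, the step where you move the base point from a reference zero $a_{k_0}$ to $z$ ``along a bounded pseudo-hyperbolic distance'' is the one fragile point of your outline: there need not be any zero within bounded pseudo-hyperbolic distance of $z$ (take $z=0$ with all $a_k$ near the circle), so that comparison is not uniform as stated; the direct estimate just described sidesteps the issue entirely. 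With that streamlining your argument is complete, and arguably more self-contained than the paper's, which leans on \cite[Exercise VII.2]{Gar}.
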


\begin{proof}
If not, there exist $\varepsilon>0$ and a sequence
$\{b_j\}\subset\D\setminus\bigcup_{k} \Delta(a_k, \epsilon)$
such that $B(b_j)\to0$ as $j\to\infty$.  Since $B$ has no zeros in
$\D\setminus\bigcup_{k} \Delta(a_k, \epsilon)$, $|b_j|\to1$ as $j\to\infty$.
By pruning the sequence we may assume that $2(1-|b_{j+1}|)\le(1-|b_j|)$ for all $j$,
which implies that $\{b_j\}$ is an interpolating sequence; see for example \cite[Theorem 9.2]{Dur}.
Since the pseudo-hyperbolic distance from $\{a_k\}$ to $\{b_j\}$ is positive, it follows that
$\{a_k\}\cup\{b_j\}$ is an interpolating sequence; see \cite[Exercise VII.2]{Gar}.  Let
$\widetilde{B}$ be the corresponding Blaschke product, and let
$\widetilde{B_j}(z)=\widetilde{B}(z)(1-\overline{b_j}z)/(b_j-z)$.  Then
$|\widetilde{B_j}|\le |B|$ for each $j$, since the zero sequence of
$\widetilde{B_j}$ contains $\{a_k\}$.  Hence, by (\ref{interpseq}), there exists $\delta>0$
such that
$$
|B(b_j)|\ge |\widetilde{B_j}(b_j)|\ge\delta.
$$
This contradicts the assumption that $B(b_j)\to0$ and completes the proof.

\end{proof}

\begin{lem}\label{interlace}
Let  $B$ be a Blaschke product with zero sequence $\{a_k\}$ contained in $(0,1)$
and satisfying $a_k<a_{k+1}$, $k\ge 1$.  Then the
zero sequence $\{b_k\}$ of $B'$ is contained in $(0,1)$ and interlaces the sequence $\{a_k\}$;
i.e. there is exactly one zero of $B'$ in each interval $(a_k, a_{k+1})$, $k \geq 1$.
\end{lem}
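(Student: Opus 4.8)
The plan is to analyze $B'$ via the logarithmic derivative. Since all zeros $a_k$ lie in $(0,1)$, the Blaschke product $B$ is real-valued on the real segment $(-1,1)$, and hence so is $B'$; thus it makes sense to study the sign changes of $B$ and $B'$ on $(-1,1)$. First I would write
$$
\frac{B'(x)}{B(x)} = \sum_k \left( \frac{-1}{x-a_k} + \frac{-\overline{a_k}}{1-\overline{a_k}x}\right) = \sum_k \frac{|a_k|^2 - 1}{(x-a_k)(1-a_k x)}, \qquad x \in (-1,1) \setminus \{a_k\},
$$
where each term is strictly negative on the whole interval $(-1,1)$ (since $1-|a_k|^2>0$, $1-a_k x>0$, and the only sign change comes from $x-a_k$). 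So on each component of $(-1,1)\setminus\{a_k\}$ the function $h(x) := B'(x)/B(x)$ is a sum of terms each of one sign on that component — but actually the sign of the $k$-th term depends on whether $x>a_k$ or $x<a_k$, so $h$ is not itself of one sign. The better route is to observe that $h$ is strictly decreasing on each component interval: differentiating term by term, $\frac{d}{dx}\frac{|a_k|^2-1}{(x-a_k)(1-a_kx)}$ can be checked to be negative on $(-1,1)$ for each $k$ (the numerator is a negative constant and the denominator $(x-a_k)(1-a_kx)$, as a downward-or-upward parabola in $x$, has the right monotonicity structure on each side of $a_k$), so $h$ is strictly monotone decreasing on each of the intervals $(-1,a_1)$, $(a_k,a_{k+1})$, $(a_N,1)$ (or $(a_k,a_{k+1})$ for all $k$ in the infinite case).

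The key step is then: on each interval $(a_k, a_{k+1})$, $h$ is continuous and strictly decreasing, and $h(x) \to +\infty$ as $x \downarrow a_k$ (the $k$-th term $\frac{|a_k|^2-1}{(x-a_k)(1-a_kx)}$ blows up to $+\infty$ since $x-a_k \downarrow 0^+$ and $1-a_kx>0$, so the term $\to -\infty \cdot$... wait — numerator negative, denominator $\to 0^+$, so the term $\to -\infty$; I must recheck the sign) — I would be careful here: as $x \downarrow a_k$, $(x-a_k) \to 0^+$, $(1-a_kx)>0$, numerator $|a_k|^2-1<0$, so the $k$-th term tends to $-\infty$, whereas as $x \uparrow a_{k+1}$ the $(k+1)$-th term tends to $+\infty$. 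Hence $h(x) \to -\infty$ as $x \downarrow a_k$ and $h(x) \to +\infty$ as $x \uparrow a_{k+1}$. This is incompatible with "$h$ strictly decreasing" — so in fact the correct monotonicity statement must be that $h$ is strictly \emph{increasing} on each component, which is what the boundary behavior forces. I would verify the term-by-term derivative sign accordingly (it is positive on $(-1,1)$). Granting that, $h$ increases continuously from $-\infty$ to $+\infty$ on each $(a_k,a_{k+1})$, hence has exactly one zero there; and since $B \neq 0$ on that open interval, the zeros of $h$ coincide with the zeros of $B'$, giving exactly one zero of $B'$ in each $(a_k,a_{k+1})$.

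To finish I would rule out other real zeros of $B'$ and stray zeros: on $(-1,a_1)$, $h$ is a sum of terms each of which is... as $x \uparrow a_1$ the first term $\to +\infty$, and as $x \downarrow -1$ each term stays bounded, with $h$ continuous; since $h$ is strictly increasing there it crosses zero at most once, but I claim it does not vanish — this requires checking $h > 0$ (equivalently, the relevant limiting sign) on $(-1, a_1)$, which follows because... here is where I would need a short computation showing the sum is positive on $(-1,a_1)$; similarly on $(a_N,1)$ in the finite case, or by noting in the infinite case that the Blaschke condition and the structure of the sum keeps $h$ away from extra sign changes outside the bracketed intervals. Finally, $B'$ has no zeros on $(a_k, a_{k+1})$ other than the one from $h$, and at the points $a_k$ themselves $B'(a_k) = -B_k(a_k)/(1-a_k^2) \neq 0$ since $B$ is interpolating-free of that constraint — actually just $B_k(a_k)\ne 0$ always — so the $a_k$ are simple zeros of $B$ and not zeros of $B'$. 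The main obstacle is the bookkeeping of signs: getting the direction of monotonicity of $h$ right and checking that no zero of $B'$ escapes to the end intervals $(-1,a_1)$ and $(a_N,1)$; everything else is a standard intermediate-value argument once the sign of $h'$ and the blow-up directions of $h$ at the $a_k$ are nailed down.
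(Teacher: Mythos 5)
Your argument lives entirely on the real segment, and that is the fatal gap. The lemma asserts that the zero sequence of $B'$ --- meaning \emph{all} zeros of $B'$ in $\D$ --- is contained in $(0,1)$, and this full statement is what gets used later (Theorem \ref{BReal} factors $B'=\tilde B G$ with $G$ zero--free on $\D$). A monotonicity--plus--intermediate--value argument for $h=B'/B$ on $(-1,1)$ can only locate and count \emph{real} critical points; nothing in your proposal excludes non-real zeros of $B'$ in $\D$. The paper's proof gets this exclusion for free from a global count: by Riemann--Hurwitz the degree-$n$ partial product $P_n$ has exactly $n-1$ critical points in $\D$, Rolle already exhibits $n-1$ of them in the intervals $(a_k,a_{k+1})$, so there are no others anywhere in $\D$, and Hurwitz's theorem transfers this to $B'$. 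Your route has no such counting device, though it can be repaired: with the correct formula $B'(z)/B(z)=\sum_k (1-a_k^2)/\bigl((z-a_k)(1-a_kz)\bigr)$ one checks that $\im\bigl[(z-a_k)(1-a_kz)\bigr]=y\,(1+a_k^2-2a_kx)$ where $1+a_k^2-2a_kx>(1-a_k)^2>0$ for $x<1$, so every summand has imaginary part of sign $-\mathrm{sign}(\im z)$ and the sum cannot vanish off the real axis. Some such step must be supplied before the lemma is proved.

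On the real-axis part: your logarithmic derivative carries the wrong sign throughout. Differentiating $\log\frac{a_k-z}{1-a_kz}$ gives $\frac{a_k^2-1}{(a_k-z)(1-a_kz)}=\frac{1-a_k^2}{(z-a_k)(1-a_kz)}$, the negative of your expression; hence the true $h$ tends to $+\infty$ as $x\downarrow a_k$, to $-\infty$ as $x\uparrow a_{k+1}$, and is strictly \emph{decreasing} on each component (each term has derivative $-(1-a_k^2)\,q_k'(x)/q_k(x)^2$ with $q_k(x)=(x-a_k)(1-a_kx)$ and $q_k'(x)=1+a_k^2-2a_kx>0$) --- the opposite of what you finally settle on. Because your formula is exactly $-h$, the count of one simple zero per interval survives, but the writeup as it stands contains contradictory sign claims and must be redone cleanly. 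The end interval you flag as needing ``a short computation'' is in fact the easy part: on $(-1,a_1)$ one has $x<a_k$ for every $k$, so every term of the (correct) sum is negative, $h<0$ there, and $B'$ has no zero in $(-1,a_1]$ since also $B'(a_k)=-B_k(a_k)/(1-a_k^2)\neq 0$; in the infinite case the intervals $[a_k,a_{k+1}]$ exhaust the rest of $[a_1,1)$, so no rightmost interval needs separate treatment. Finally, for an infinite product you should justify that the series for $B'/B$ converges locally uniformly on $\D\setminus\{a_k\}$ and may be differentiated term by term; this is routine but not free.
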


\begin{proof}
By the Riemann - Hurwitz formula \cite[Theorem 5.4.1]{Bea},  each partial product $$P_n(z) = \prod_{k=1}^{n}   \frac{a_k - z}{1 - \overline a_k z}$$
of $B$ of degree $n$ has exactly $n-1$ critical points in $\D$.
Since $P_n$ is real on $(0,1)$, Rolle's Theorem shows that there is a critical point of $P_n$
in each interval $(a_k, a_{k+1})$, $1 \leq k \leq n-1$.  As this accounts for all
$n-1$ critical points,
all critical points of $P_n$ are in $(0,1)$ and interlace the sequence $\{a_k\}$,
$1 \leq k \leq n$.  Since ${P_n}' \rightarrow B'$  uniformly on compact subsets of $\D$,
it follows from Hurwitz's Theorem that the  sequence $\{b_k\}$  belongs to the interval $(0,1)$ and  interlaces the sequence  $\{a_k\}$.
\end{proof}

\begin{thm} \label{BReal}
If $B$ is an interpolating Blaschke product with zero sequence $\{a_k\}$ contained in the interval $(0,1)$, then the operator  $T_{B}$ is unbounded on $H^{\infty}$.
\end{thm}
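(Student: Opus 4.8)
The plan is to exhibit a single unit-norm test function $f\in H^\infty$ with $\|T_Bf\|_\infty=\infty$, built from the critical points of $B$. The point is that $T_Bf(r)=\int_0^rf(t)B'(t)\,dt$ for $0<r<1$, so if $f$ is chosen with $f(t)B'(t)\ge0$ throughout $(0,1)$ then $\|T_Bf\|_\infty\ge\int_0^1|f(t)||B'(t)|\,dt$, and it is enough to make this integral diverge while keeping $\|f\|_\infty=1$. The first step would be to record the behaviour of $B$ on $(0,1)$. By Lemma \ref{interlace}, $B'$ has exactly one zero $b_k$ in each interval $(a_k,a_{k+1})$; since $B$ vanishes at the endpoints and is nonzero inside, $|B|$ attains its maximum over $[a_k,a_{k+1}]$ at $b_k$, $B$ is strictly monotone on each of $(a_k,b_k)$ and $(b_k,a_{k+1})$, and on $(0,1)$ the sign of $B'$ changes precisely at the $b_k$'s, so that $\int_{a_k}^{a_{k+1}}|B'(t)|\,dt=2|B(b_k)|$. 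The key quantitative input is a uniform lower bound $|B(b_k)|\ge\gamma>0$: because $\{a_k\}$ is separated, the pseudo-hyperbolic midpoint $m_k$ of $a_k$ and $a_{k+1}$ stays pseudo-hyperbolically bounded away from $a_k$ and $a_{k+1}$, hence from every $a_j$, so Lemma \ref{EBP} gives $|B(m_k)|\ge\gamma$ and therefore $|B(b_k)|=\max_{[a_k,a_{k+1}]}|B|\ge\gamma$. (This already shows $B\notin\rm{BRV}$, but that by itself does not force $T_B$ to be unbounded.)

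Next I would check that the sequence of critical points $\{b_k\}$ is itself an interpolating sequence, which lets me take $f$ to be a unimodular constant times the Blaschke product $\widetilde B$ with zero set $\{b_k\}$ (note $1-a_{k+1}<1-b_k<1-a_k$, so $\{b_k\}$ is a Blaschke sequence). Separation follows from Schwarz--Pick applied to $B$: $\rho(a_k,b_k)\ge\rho\big(B(a_k),B(b_k)\big)=|B(b_k)|\ge\gamma$, and similarly $\rho(b_k,a_{k+1})\ge\gamma$, so consecutive $b_k$'s --- and hence all of them --- are separated. For the Carleson condition on $\sum_k(1-b_k)\delta_{b_k}$ one uses $1-b_k<1-a_k$ together with the Carleson condition for $\sum_k(1-a_k)\delta_{a_k}$: in a Carleson box $S_I$ with $1\in I$, the indices with $b_k\in S_I$ form a tail $\{k\ge k_1\}$, and $a_{k+1}>b_k$ puts $a_k\in S_I$ for all $k\ge k_1+1$, so $\sum_{b_k\in S_I}(1-b_k)\le(1-b_{k_1})+\sum_{a_k\in S_I}(1-a_k)\le C|I|$. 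By Proposition \ref{is}, $\{b_k\}$ is interpolating, so Lemma \ref{EBP} applies to $\widetilde B$ as well.

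Finally I would assemble the estimate. Choose $c\in\{1,-1\}$ so that $f:=c\widetilde B$ satisfies $f(t)B'(t)\ge0$ on $(0,1)$; this is possible since on $(0,1)$ both $\widetilde B$ and $B'$ are real and change sign exactly at the points $b_k$, so $\widetilde B\cdot B'$ has a double zero at each $b_k$ and no other zero, hence constant sign. Fix $\epsilon>0$ small enough that the discs $\Delta(b_k,\epsilon)$ are pairwise disjoint with $\Delta(b_k,\epsilon)\cap(0,1)\subseteq(a_k,a_{k+1})$, and small enough that $C\epsilon<\gamma$, where $C$ is the absolute constant in $\int_{\Delta(b,\epsilon)\cap\R}|B'(t)|\,dt\le C\epsilon$ (which follows from the Schwarz--Pick bound $|B'(t)|\le(1-t^2)^{-1}$ and the fact that $\Delta(b,\epsilon)\cap\R$ is an interval of length at most $4\epsilon(1-b^2)$). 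Let $\gamma'>0$ be the constant from Lemma \ref{EBP} for $\widetilde B$, so $|\widetilde B|\ge\gamma'$ off $\bigcup_k\Delta(b_k,\epsilon)$. Then for every $k$,
\begin{equation*}
\int_{a_k}^{a_{k+1}}|\widetilde B(t)||B'(t)|\,dt\ \ge\ \gamma'\int_{(a_k,a_{k+1})\setminus\Delta(b_k,\epsilon)}|B'(t)|\,dt\ \ge\ \gamma'\big(2|B(b_k)|-C\epsilon\big)\ \ge\ \gamma'\gamma>0.
\end{equation*}
Since $\|f\|_\infty=1$ and $T_Bf(r)=\int_0^r|\widetilde B(t)||B'(t)|\,dt$ for $0<r<1$, summing over $k$ forces $\|T_Bf\|_\infty=\infty$, so $T_B$ is unbounded on $H^\infty$. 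I expect the middle step --- controlling the location of the critical points well enough to recognize $\{b_k\}$ as interpolating --- to be the main obstacle; the remaining steps are a fairly routine combination of Schwarz--Pick with Lemmas \ref{interlace} and \ref{EBP}.
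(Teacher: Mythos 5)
Your proof is correct, and it follows the same overall architecture as the paper's: both use Lemma \ref{interlace} to place the critical points $\{b_k\}$ of $B$ in the gaps of $\{a_k\}$, show that $\{b_k\}$ is itself an interpolating sequence via Proposition \ref{is}, and then take the Blaschke product $\tilde B$ with zeros $\{b_k\}$ as the unit-norm test function whose image under $T_B$ diverges along $(0,1)$. Where you genuinely differ is in the quantitative engine. The paper imports \cite[Lemma 3.5]{GPV} --- the pointwise lower bound $|B'(z)|\ge \beta/(1-|a_k|)$ on disjoint pseudo-hyperbolic discs $\Delta(a_k,\alpha)$ --- and uses it twice: to force $b_k\notin\Delta(a_n,\alpha)$ (hence separation of $\{b_k\}$) and to bound $\int_{\Delta(a_k,\alpha/2)\cap(0,1)}\tilde B(t)B'(t)\,dt$ below by $\alpha\beta\gamma/2$, so the divergent mass is collected on small discs around the zeros $a_k$. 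You instead collect it on the whole interval $(a_k,a_{k+1})$ minus a small disc around $b_k$, using the elementary identity $\int_{a_k}^{a_{k+1}}|B'(t)|\,dt=2|B(b_k)|$ together with the uniform bound $|B(b_k)|\ge\gamma$ obtained from Lemma \ref{EBP} applied to $B$ at the hyperbolic midpoints of consecutive zeros; Schwarz--Pick then supplies both the separation $\rho(a_k,b_k)\ge|B(b_k)|\ge\gamma$ and the $O(\epsilon)$ control of the contribution of $\Delta(b_k,\epsilon)$. The net effect is a self-contained argument that avoids the external citation to \cite{GPV} at the cost of a somewhat longer chain of elementary estimates; the steps you flag as delicate (the midpoint argument, the sign analysis for $\tilde B B'$, the Carleson condition for $\{b_k\}$) all check out.
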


\begin{proof}

 By Lemma \ref{interlace}, the zero sequence $\{b_k\}$   of $B'$ is contained in $(0,1)$,
and interlaces
 the sequence $\{a_k\}$.  Hence $\{b_k\}$ is a Blaschke sequence, and
we can write $B' = \tilde{B} G$, where $\tilde{B}$ is the Blaschke product with zeros $\{b_k\}$
and $G$ is never zero on $\mathbb D$.
Since $B'$ and $\tilde{B}$ are real on $(0,1)$, so is $G$.  We may assume
without loss of generality that $G(r) > 0$  for $r \in (0,1)$.

Next we show that $\{b_k\}$ satisfies condition (3) of Proposition \ref{is}, and hence is an interpolating sequence.
By \cite[Lemma 3.5]{GPV} there exist positive constants $\alpha, \beta$  such that
\begin{equation}\label{EGPV}
    |B'(z)| \geq \frac{\beta}{1 - |a_k|},
\end{equation}
for all $z \in \Delta(a_k, \alpha)$, where the pseudo-hyperbolic discs $\{\Delta(a_k, \alpha)\}_{k=1}^{\infty}$ are pairwise disjoint.
Hence $b_k \notin \Delta(a_n, \alpha)$, for all $k,n = 1,2, ...$.  Since there is exactly one $b_k$ in each interval $(a_k, a_{k+1})$,  it follows that $\rho(b_i, b_j) >  \alpha$, $i \neq j$.  It is easy to check that $\mu_{\{b_k\}}$ is a Carleson measure on $\D$. Hence condition (3) of Proposition \ref{is} is satisfied, and $\{b_k\}$ is an interpolating sequence.

 We now show that $T_B \tilde B \notin H^{\infty}$, and hence $T_B$ is unbounded on $H^{\infty}$.
For $t \in (0,1)$,   $ \tilde{B}(t) B'(t) = \tilde{B}^2 (t) G (t) \geq 0 $, and hence $ \tilde{B}(t) B'(t) =  |\tilde{B}(t)| |B'(t)|$.
If  $t \in \Delta(a_n, \frac{\alpha}{2})$, then $t \in \D\setminus\bigcup_{k} \Delta(b_k, \frac{\alpha}{2})$ and
 by Lemma \ref{EBP} there exists $\gamma > 0$ such that  $|\tilde{B}(t)| > \gamma$.
 It is well known that the euclidean diameter of $\Delta(a_k, R)$ satisfies
 $\diam(\Delta(a_k, R))\ge R(1 - |a_k|)$; see for example \cite[p. 3]{Gar}.
 Using these estimates and the estimate for $|B'|$ from (\ref{EGPV}), we have
 $$
 \int_{\Delta(a_k, \frac{\alpha}{2}) \cap (0,1)} \tilde{B}(t)B'(t) dt
 \geq \frac{\beta \gamma}{1 - |a_k|}\, \diam\left( \Delta \left( a_k,  \frac{\alpha}{2} \right) \right) \geq \frac{\alpha\beta\gamma}2 >0\,.
 $$

Therefore
$$
\lim_{r \rightarrow 1}  T_B\tilde{B} (r)
  = \lim_{r \rightarrow 1} \int_{0}^{r} \tilde{B}(t) B'(t) dt
  \geq \sum_{k=1}^{\infty} \int_{\Delta(a_k, \frac{\alpha}{2}) \cap (0,1)} \tilde{B}(t)B'(t) dt
  =\infty,
$$
and so $T_B \tilde B \notin H^{\infty}$.  This completes the proof.

\end{proof}


Since $T[H^{\infty}]$ is an algebra properly contained in $H^{\infty}$, we are brought to consider the disc algebra $A$  of analytic functions on $\D$ which extend to be continuous on $\overline{\D}$.
As noted earlier, uniformly Frostman Blaschke products are in $T[H^{\infty}]$.  Since
such Blaschke products may be infinite,
membership of $g$ in $A$ is not necessary for $T_g$ to be bounded on $H^{\infty}$.
It turns out to also be not sufficient, yet the interesting examples may shed some light on the problem, especially in the univalent case.  First we show that there exist univalent functions
$g\in T[H^\infty]\setminus A$.
 Key to our argument is the
 next theorem, concerning length distortion by a conformal map.  It is a version,
 suitable for our application, of
the important Gehring-Hayman Theorem.  We use $\ell(E)$ to denote the arc length of a rectifiable
curve $E$.

\begin{thm} \emph{\cite[p. 72]{Pom}}
\label{hypgeo}
Let $f: \D \to G$ be analytic and univalent.  Let $E \subseteq G$ be a rectifiable curve from
$f(0)$ to $f(z)$, where $z = re^{i\theta}$. There is an absolute constant $K$ such that
  $$\int_0^r |f'(te^{i\theta})| \, dt \leq K \ell(E).$$
\end{thm}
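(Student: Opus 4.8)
The plan is to recognize this as the Gehring--Hayman theorem and, after one conformal reduction, to sketch its proof by a Koebe--distortion decomposition of the geodesic together with a length comparison against the competing curve $E$.

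First I would reduce to a purely geometric statement about $G$. Since $f$ maps $\D$ conformally onto $G$, it is an isometry from the Poincar\'e metric on $\D$ onto the hyperbolic metric $\lambda_G(w)\,|dw|$ of $G$, and radial segments are hyperbolic geodesics of $\D$; hence $f([0,re^{i\theta}])$ is exactly the hyperbolic geodesic $\gamma$ of $G$ joining $f(0)$ to $f(re^{i\theta})$. Thus the inequality asserts that a hyperbolic geodesic in a simply connected domain has Euclidean length at most a universal constant times that of any rectifiable arc joining its endpoints inside the domain --- the content of \cite[p.~72]{Pom} --- and the steps below indicate why it holds.

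Next I would set up the decomposition. For a suitable $N$, choose points $z_j = t_j e^{i\theta}$ on the segment with $0 = t_0 < t_1 < \dots < t_N = r$ and consecutive $z_j$ at Poincar\'e distance $1$ (the last step having length at most $1$), put $w_j = f(z_j)$ and $d_G(w) = \mathrm{dist}(w,\partial G)$. The Koebe distortion theorem, applied on each hyperbolic unit ball about $z_j$, shows that $|f'|$ and $1-|\cdot|$ vary there by a bounded factor; together with $\tfrac14(1-|z_j|^2)|f'(z_j)| \le d_G(w_j) \le (1-|z_j|^2)|f'(z_j)|$ this gives a universal $C_1$ with
$$\int_{t_{j-1}}^{t_j}|f'(te^{i\theta})|\,dt \le C_1\,d_G(w_j), \qquad d_G(w_{j-1}) \le C_1\, d_G(w_j),$$
the images $f([z_{j-1},z_j])$ being adjacent sub-arcs of Euclidean diameter $\asymp d_G(w_j)$. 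Summing over $j$, $\int_0^r |f'(te^{i\theta})|\,dt \le C_1 \sum_j d_G(w_j)$, so the theorem reduces to the bound $\sum_j d_G(w_j) \le C_2\,\ell(E)$.

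That last bound is the hard part, and it is the substance of Gehring--Hayman. The strategy is to charge to each index $j$ a piece of $E$ of length at least $c\, d_G(w_j)$, with bounded overlap: since $\gamma$ crosses the Koebe disk $B(w_j,\tfrac12 d_G(w_j)) \subseteq G$ through its centre while $E$ too joins $w_0$ to $w_N$ inside $G$, an extremal-length (or elementary crosscut) argument forces $E$ either to penetrate a fixed fraction of that disk, contributing length $\asymp d_G(w_j)$ there, or to go around it, again contributing comparable length; grouping the indices $j$ according to the dyadic size of $d_G(w_j)$ and using that the hyperbolically $1$-separated points $w_j$ cannot cluster keeps the total multiplicity bounded. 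Making this charging rigorous, especially when $E$ strays far from $\gamma$, is precisely where the difficulty lies, and there I would simply appeal to \cite[p.~72]{Pom} rather than reproduce the proof.
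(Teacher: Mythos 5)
The paper does not prove this statement at all: it is quoted verbatim as the Gehring--Hayman theorem with a citation to \cite[p.~72]{Pom}, so there is no internal proof to compare against. Your proposal correctly identifies the result, and the two reductions you carry out are sound: the conformal invariance of the hyperbolic metric does convert the inequality into the statement that the hyperbolic geodesic from $f(0)$ to $f(z)$ has Euclidean length at most a universal multiple of $\ell(E)$, and the decomposition of the radius into hyperbolic unit steps together with the Koebe distortion estimates $\tfrac14(1-|z|^2)|f'(z)|\le d_G(f(z))\le(1-|z|^2)|f'(z)|$ correctly reduces everything to the bound $\sum_j d_G(w_j)\le C\,\ell(E)$. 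That last inequality is the entire substance of the theorem, and you do not prove it --- you describe a plausible charging heuristic and then explicitly defer to the same reference the paper cites. So what you have is an accurate and useful gloss on where the difficulty sits, not an independent proof; since the authors themselves treat the result as an external citation, this is an acceptable level of detail here, but be aware that the ``penetrate or go around the Koebe disk'' step is genuinely delicate (the competing curve $E$ can avoid the disks $B(w_j,\tfrac12 d_G(w_j))$ entirely), and the standard rigorous versions replace it with a harmonic-measure or extremal-length estimate as in Pommerenke's Section~4.5.
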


\begin{prop}
\label{gnotinA}
There exist univalent $g \notin A$ such that $T_g$ is bounded.
\end{prop}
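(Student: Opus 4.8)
The plan is to realize $g$ as a Riemann map onto a suitably chosen bounded simply connected domain and to control its radial variation by means of the Gehring--Hayman estimate of Theorem~\ref{hypgeo}. Concretely, we produce a bounded, simply connected domain $G\subseteq\mathbb C$ enjoying two properties: (a) $\partial G$ is not locally connected; and (b) there is a constant $M$ such that any two points of $G$ can be joined by a rectifiable arc contained in $G$ of length at most $M$. Granting such a $G$, let $g\colon\D\to G$ be a conformal map. Then $g$ is univalent, and since $\partial G$ is not locally connected, Carath\'{e}odory's extension theorem (see \cite{Pom}) shows that $g$ has no continuous extension to $\overline{\D}$; in particular $g\notin A$.

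To see that $T_g$ is bounded on $H^\infty$ it is enough, by Proposition~\ref{ubrv}, to verify that $g\in\mathrm{BRV}$. Fix $\theta\in[0,2\pi)$ and $r\in(0,1)$, and use property (b) to pick a rectifiable arc $E\subseteq G$ from $g(0)$ to $g(re^{i\theta})$ with $\ell(E)\le M$. Theorem~\ref{hypgeo} then gives
\begin{equation*}
\int_0^r |g'(te^{i\theta})|\,dt\le K\,\ell(E)\le KM,
\end{equation*}
where $K$ is the absolute constant appearing there. Letting $r\to1$ and applying the monotone convergence theorem, we get $V(g,\theta)\le KM$ for every $\theta$, so $\sup_\theta V(g,\theta)<\infty$ and $g\in\mathrm{BRV}$, as required.

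It remains to construct $G$, and for this we use the comb domain
\begin{equation*}
G=\{x+iy:0<x<1,\ 0<y<1\}\setminus\bigcup_{n\ge2}\bigl(\{1/n\}\times(0,\tfrac{1}{2}]\bigr),
\end{equation*}
the open unit square with the vertical slits of height $1/2$ along the lines $x=1/n$ removed. It is elementary that $G$ is open, bounded, connected, and simply connected, since each removed slit issues from the bottom edge of the square and therefore creates no hole. Property (b) holds with $M=3$: given $p=x_p+iy_p$ and $q=x_q+iy_q$ in $G$, the polygonal path that runs vertically from $p$ to $x_p+\tfrac{3}{4}i$, then horizontally to $x_q+\tfrac{3}{4}i$, then vertically to $q$, is contained in $G$ and has length less than $3$; indeed the horizontal leg avoids every slit because the slits reach only up to imaginary part $1/2<\tfrac{3}{4}$, while the two vertical legs avoid every slit because any point of $G$ on a line $x=1/n$ necessarily has imaginary part greater than $1/2$. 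For property (a), consider the point $\tfrac{i}{4}$ on the left edge $x=0$ of the square: every sufficiently small disc about $\tfrac{i}{4}$ meets infinitely many of the slits $\{1/n\}\times(0,\tfrac{1}{2}]$, and portions of $\partial G$ lying on two distinct slits cannot be joined by a connected subset of $\partial G$ inside such a disc without descending to the bottom edge; hence $\partial G$ is not locally connected at $\tfrac{i}{4}$.

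The step I expect to require the most care is the verification of property (a) --- that the slits accumulating toward the segment $x=0$ genuinely destroy local connectedness of $\partial G$ --- together with the precise statement and citation of Carath\'{e}odory's theorem, in the form that a conformal map of $\D$ onto a bounded domain extends continuously to $\overline{\D}$ if and only if the boundary of the domain is locally connected. Everything else reduces at once to Theorem~\ref{hypgeo} and Proposition~\ref{ubrv}.
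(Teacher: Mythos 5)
Your proposal is correct and follows essentially the same route as the paper: a Riemann map onto a comb domain (unit square minus slits accumulating at a boundary edge), with $g\notin A$ because $\partial G$ is not locally connected, and $g\in\mathrm{BRV}\subseteq T[H^\infty]$ via the Gehring--Hayman estimate applied to short polygonal paths. The only differences are cosmetic (slits at $1/n$ from the bottom edge rather than at $2^{-n}$ from the top).
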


\begin{proof}
Our example is a Riemann map to a comblike domain.
Denote the unit square
  $$R = \{x + iy : 0 < x < 1, 0 < y < 1\}$$
and the set of segments
  $$S = \{2^{-n} + iy : 1/2 < y < 1, n = 1,2,... \}.$$
Let $G$ be the region $R\setminus S$ (see Figure \ref{combfig}), and let $g$ be a Riemann map
from $\D$ to $G$ with $g(0)=1/2 +i/4$.  Then $g$ is not in $A$, since the boundary
$\partial G$ is not locally connected \cite[Theorem 2.1]{Pom}.

Note that any point $g(z) \in G$ can be connected to the point $g(0)$ by a horizontal and a vertical line segment, comprising a rectifiable curve of length less than 2.
Thus  from Theorem \ref{hypgeo} we have
$$
V(g,\theta)=\lim_{r\to 1}\int_0^r |g'(te^{i\theta})| \, dt\le 2K, \quad 0\le\theta\le 2\pi.
$$
Hence $g\in T[H^\infty]$ by Proposition \ref{ubrv}.
\end{proof}

\begin{figure}
\begin{minipage}{6cm}
\begin{tikzpicture}[scale=1.5]
\draw (0,0) rectangle (4,4); 
\draw (2,4) --(2,2) (1,4) --(1,2) (0.5,4) --(0.5,2); 
\draw [dashed] (0.25,4) --(0.25,2);
\draw  [fill]  (2,1) circle [radius=0.02]
node  [anchor=west] {$g(0)$};
\end{tikzpicture}
\caption{$g \notin A$, $g \in T[H^{\infty}]$}
\label{combfig}
\end{minipage}
\quad
\begin{minipage}{6cm}
\begin{tikzpicture}[scale=0.6]
\draw (0,0) circle (5cm);
\draw (-3.33333,0) arc (180:0:4.16667cm) ;
\draw (-3.33333,0) arc (180:360:2.91667cm) ;
\draw (-2,0) arc (180:0:2.25cm) ;
\draw (-2,0) arc (180:360:1.83333cm) ;
\draw (-1.42857,0) arc (180:0:1.54762cm) ;
\draw (-1.42857,0) arc (180:360:1.33929cm) ;
\draw (-1.11111,0) arc (180:0:1.18056cm) ;
\draw (-1.11111,0)[dashed] arc (180:360:1.05556cm) ;
\draw (-0.909091,0) [dashed] arc (180:0:0.954545cm) ;
\draw  [fill]  (5,0) circle [radius=0.05]
node  [anchor=west] {1};
\draw  [fill]  (2.5,0) circle [radius=0.05]
node  [anchor=west] {$\frac12$};
\draw [fill] (1.66667,0)  circle [radius=0.05]
node [anchor=west] {$\frac13$};
\draw  [fill]  (0,0) circle [radius=0.05]
node  [anchor=west] {0};
\end{tikzpicture}
\caption{$g \in A$, $g \notin T[H^{\infty}]$}
\label{deathspiralfig}
\end{minipage}
\end{figure}

Next,  we give a  proposition that provides an equivalent formulation of when a univalent function
$g$ is in $T[H^\infty]$; see also  \cite[ p. 2]{Sis}. This will then be used
to give an example that shows $A \nsubseteq T[H^{\infty}]$.

Let $\Omega$ be a simply connected domain and let $w_0\in\Omega$.   Define the
linear operator taking a function $f$ holomorphic on $\Omega$ to its indefinite integral by
\begin{equation}\label{IntOpJ}
J_{w_0}f(w)=\int _{w_0}^wf(t)dt, \quad w\in\Omega,
\end{equation}
where integration is over any smooth curve in $\Omega$ connecting $w_0$ to $w$. Then $J_{w_0}f$
is holomorphic and well defined since $\Omega$ is simply connected.  Let $H^\infty(\Omega)$
denote the usual space of bounded holomorphic functions on $\Omega$, with supremum norm.

\begin{prop}\label{univalent}
Let $\Omega$ be a simply connected proper subdomain of the plane and let $g$  be a conformal map from $\D$ onto $\Omega$.  Then $g \in T[H^{\infty}]$ if and only if $J_{g(0)}$ is a bounded operator
on $H^\infty(\Omega)$.
\end{prop}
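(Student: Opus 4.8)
The plan is to transfer the boundedness question for $T_g$ on $H^\infty(\D)$ to the boundedness question for $J_{g(0)}$ on $H^\infty(\Omega)$ by a change of variables, using the fact that $g$ is a conformal bijection. First I would record the elementary identity relating the two operators. For $f\in H^\infty(\D)$, the substitution $w=g(t)$ in the defining integral gives
\begin{equation*}
  T_g f(z)=\int_0^z f(t)g'(t)\,dt=\int_{g(0)}^{g(z)} (f\circ g^{-1})(w)\,dw = J_{g(0)}(f\circ g^{-1})\bigl(g(z)\bigr),
\end{equation*}
where the middle integral is over the image under $g$ of the segment $[0,z]$, a smooth curve in $\Omega$ from $g(0)$ to $g(z)$; since $\Omega$ is simply connected this integral is path-independent, so it agrees with $J_{g(0)}(f\circ g^{-1})(g(z))$. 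In operator form this says $T_g f = \bigl(J_{g(0)}(f\circ g^{-1})\bigr)\circ g$, i.e. $T_g = C_g\circ J_{g(0)}\circ C_{g^{-1}}$, where $C_\psi$ denotes composition with $\psi$.

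The second step is to observe that $f\mapsto f\circ g^{-1}$ is an isometric isomorphism from $H^\infty(\D)$ onto $H^\infty(\Omega)$, with inverse $h\mapsto h\circ g$, since $g:\D\to\Omega$ is a bijection and the sup-norm is preserved by precomposition with a bijection. Granting the operator identity above, boundedness of $T_g$ on $H^\infty(\D)$ is then equivalent to boundedness of $J_{g(0)}$ on $H^\infty(\Omega)$: if $J_{g(0)}$ is bounded with norm $M$, then $\|T_g f\|_{\infty,\D}=\|J_{g(0)}(f\circ g^{-1})\|_{\infty,\Omega}\le M\|f\circ g^{-1}\|_{\infty,\Omega}=M\|f\|_{\infty,\D}$, and conversely, given $h\in H^\infty(\Omega)$ one applies $T_g$ to $f=h\circ g\in H^\infty(\D)$ and reads off $\|J_{g(0)}h\|_{\infty,\Omega}=\|T_g f\|_{\infty,\D}\le\|T_g\|\,\|f\|_{\infty,\D}=\|T_g\|\,\|h\|_{\infty,\Omega}$. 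In particular the two operator norms are equal.

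The one point requiring genuine care — and the step I expect to be the main obstacle — is the change-of-variables identity itself, specifically verifying that $J_{g(0)}(f\circ g^{-1})$ is a well-defined holomorphic function on $\Omega$ and that the parametrized integral along $t\mapsto g(t)$, $t\in[0,z]$, computes it correctly. The holomorphy of $f\circ g^{-1}$ on $\Omega$ is immediate from that of $f$ and of $g^{-1}$, and $\Omega$ simply connected makes $J_{g(0)}$ well-defined by the hypothesis of the proposition (as set up in the paragraph preceding the statement). The computation $\int_0^z f(t)g'(t)\,dt=\int_{\Gamma}(f\circ g^{-1})(w)\,dw$ with $\Gamma=g([0,z])$ is the standard substitution formula for contour integrals, valid because $g$ is $C^1$ (indeed holomorphic) on a neighborhood of the segment $[0,z]\subset\D$; I would note that $[0,z]$ is compact in $\D$ so no boundary issues arise. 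With the identity in hand, the equivalence of norms and hence the statement follow as above, and I would close by remarking that the argument in fact shows $\|T_g\|_{H^\infty(\D)}=\|J_{g(0)}\|_{H^\infty(\Omega)}$.
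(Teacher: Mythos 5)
Your proposal is correct and is essentially the paper's own argument: the same change of variables $w=g(t)$ yielding $T_g(f)= \bigl(J_{g(0)}(f\circ g^{-1})\bigr)\circ g$, followed by the observation that composition with $g$ is an isometric isomorphism between $H^\infty(\Omega)$ and $H^\infty(\D)$. The paper merely states the identity in the equivalent form $T_g(f\circ g)=F\circ g$ with $F=J_{g(0)}f$ for $f$ holomorphic on $\Omega$, so there is no substantive difference.
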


\begin{proof}
Let $g$  be a conformal map from $\D$ onto $\Omega$.  A change of variable shows that, for
$f$ holomorphic on $\Omega$ and $F=J_{g(0)}f$,
\begin{align}\label{IntOp}
T_g(f \circ g)(z) = \int_{0}^{z} F'(g(t)) g'(t) \; dt
 = F(g(z)).
 \end{align}
Since composition with $g$ is an isometry from $H^\infty(\Omega)$ onto $H^\infty(\D)$,
the result follows.
\end{proof}


\begin{prop}\label{deathspiral}
There exists a univalent function $g\in A$ such that $T_g$ is not bounded.
\end{prop}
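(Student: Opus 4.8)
The plan is to construct $\Omega$ as a ``death spiral'' domain --- a Jordan domain that spirals infinitely many times around the point $0$ while shrinking its radial extent, as suggested by Figure \ref{deathspiralfig}. Concretely, I would take $\Omega$ to be the unit disc with a slit removed along a curve of the form $r=\rho(\theta)$, $\theta\ge 0$, where $\rho(\theta)\downarrow 0$ monotonically (for instance $\rho(\theta)=e^{-\theta}$ or $\rho(\theta)= 1/(n+1)$ on the $n$-th turn, matching the picture with the radii $1,\tfrac12,\tfrac13,\dots$ on the positive axis). One must be slightly careful that $\partial\Omega$ is a curve through which $\Omega$ is locally connected so that the Riemann map $g:\D\to\Omega$, normalized by $g(0)$ at some convenient interior point, extends continuously to $\overline\D$; this gives $g\in A$ by Carath\'eodory's theorem (\cite[Theorem 2.1]{Pom}). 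The design is that the boundary slit is a rectifiable spiral of finite length on each turn but the domain forces any curve from $g(0)$ out toward the spiral's limit point $0$ to wind around, so that short curves in the Euclidean metric become long in the integral sense.

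Next I would invoke Proposition \ref{univalent}: it suffices to exhibit $f\in H^\infty(\Omega)$ for which $J_{g(0)}f$ is \emph{unbounded} on $\Omega$. The natural choice is $f(w)=1/w$, which is bounded on $\Omega$ precisely because $\Omega$ stays away from $0$ (indeed $0\notin\overline\Omega$ if we arrange the spiral to accumulate at $0$ without containing it --- alternatively one uses $f(w)=1/(w-w_*)$ for a boundary point $w_*$). Then for a point $w_n\in\Omega$ near the inner end of the $n$-th turn, any path in $\Omega$ from $g(0)$ to $w_n$ must traverse all the intermediate turns, so
$$
|J_{g(0)}f(w_n)| = \left|\int_{w_0}^{w_n}\frac{dt}{t}\right|
$$
picks up a contribution of roughly $2\pi$ from each completed winding (the imaginary part of $\log$ increases by $\approx 2\pi$ per turn), while the real part stays bounded since $|t|$ stays between fixed bounds on a given turn. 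Hence $|J_{g(0)}f(w_n)|\gtrsim n\to\infty$, so $J_{g(0)}$ is unbounded on $H^\infty(\Omega)$ and therefore $T_g$ is unbounded on $H^\infty$ by Proposition \ref{univalent}.

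The main obstacle is making the geometry rigorous: I must confirm (i) that $\Omega$ is a \emph{Jordan} domain (so $g\in A$) --- this requires choosing the slit so that $\partial\Omega$ is a simple closed curve and $\Omega$ is locally connected at every boundary point, including at the accumulation point $0$ of the spiral, which is the delicate spot; and (ii) that every path in $\Omega$ from $g(0)$ to a deep point $w_n$ genuinely has winding number about $0$ growing like $n$, i.e.\ that the slit really does block shortcuts. Point (ii) is essentially topological: since the slit separates consecutive turns, a path from the outside to the $n$-th turn must cross each of the ``annular'' regions between turns, and in each such region the angular coordinate must sweep through a full $2\pi$; quantifying this via the argument principle (or by lifting to the logarithm, on which $\Omega$ maps to a simply connected strip-like region) gives the lower bound $|\mathrm{Im}\int dt/t|\ge 2\pi(n-1)$. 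Once these two geometric facts are in hand the analytic conclusion is immediate from the propositions already proved.
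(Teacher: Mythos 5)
There is a genuine gap: your test function $f(w)=1/w$ is not bounded on $\Omega$. For a spiral slit accumulating at $0$, the domain $\Omega$ necessarily contains points arbitrarily close to $0$ (e.g.\ the points $r_n=\tfrac12(1/n+1/(n+1))$ lying between consecutive turns), so $0\in\overline{\Omega}$ and $1/w\notin H^\infty(\Omega)$; the same objection applies to $1/(w-w_*)$ for any boundary point $w_*$. Your parenthetical escape route --- arranging the spiral so that $0\notin\overline{\Omega}$ --- is not available: if the spiral winds infinitely often while staying at positive distance from $0$, its turns must accumulate on a continuum (e.g.\ a circle), and then $\partial\Omega$ fails to be locally connected, so $g\notin A$ and the example collapses. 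The spiral must shrink to a single point precisely so that Carath\'eodory's theorem applies, and that forces the unboundedness of $1/w$ on $\Omega$. Quantitatively, your own estimate shows why the choice cannot work: at $w_n\approx 1/n$ the integral $\int dw/w=\log w$ has modulus $\approx 2\pi n$, but the integrand has modulus $\approx n$ there, so the integral grows no faster than the function itself.

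The repair is to pass to the logarithm of the logarithm. Fix the branch $\ell$ of $\log$ on $\Omega$ with $\im\ell>0$ and $\ell(r_n)=\log r_n+2\pi i(n+1)$, and set $H=\mathrm{Log}\circ\ell$, so that $H'(w)=1/(w\,\ell(w))$. On the $n$-th turn one has $|w|\sim 1/n$ while $\im\ell(w)\sim 2\pi n$, so $|w\,\ell(w)|$ is bounded below and $H'\in H^\infty(\Omega)$; yet $|H(r_n)|\ge|\re H(r_n)|=\log|\ell(r_n)|\sim\log n\to\infty$, so $J_{g(0)}H'=H-H(g(0))$ is unbounded. Your winding heuristic ($\im\ell$ increases by $2\pi$ per turn) is exactly the right geometric input; it just has to be fed into the integrand as the divisor $\ell(w)$ rather than left in the integral. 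With that substitution the argument closes via Proposition \ref{univalent} as you describe.
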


\begin{proof}
Let $\Omega = \D \backslash \overline{ \{\gamma(t): t \geq 1 \}}$, where $\displaystyle{\gamma(t) = e^{2 \pi i t} / t, \; t \geq 1}$ (see Figure \ref{deathspiralfig}).
Note that $\displaystyle{\gamma(n) = 1/n}$ ,  for  $n \in \mathbb N$.  Clearly $\Omega$ is a simply connected domain with locally connected boundary and a prime end at $0$.  Let $g$ be a Riemann map from $\D$ onto $\Omega$.  Then $g\in A$ since $\partial\Omega$ is locally connected
\cite[Theorem 2.1]{Pom}, but we will show that $g\notin T[H^\infty]$.

Let $\displaystyle{r_n = \frac{1}{2} \left( \frac{1}{n} + \frac{1}{n+ 1} \right) }$,  $n \in \mathbb N$, so $r_n\in\Omega$.  Since $\Omega$ is simply connected and $0 \notin \Omega$,
there is a branch $\ell(z)$ of $\log(z)$ on $\Omega$ such that
\begin{align*}
\ell( r_n)  = \log|r_n| + 2 \pi i (n +1).
\end{align*}
Since $\im \ell(z)>0$, $z\in\Omega$,  we can
define $H(z) =$ Log$ (\ell( z))$ on $\Omega$, where Log  is the principal branch of the logarithm.  Then
$$|H( r_n)|\ge|\re H( r_n)|  =  \log |\ell (r_n)| \sim \log n, $$
 so $H \notin H^{\infty}(\Omega)$.  Here $a_n \sim b_n$ means that $\displaystyle{\frac{a_n}{b_n} \rightarrow 1}$, as $n \rightarrow \infty$.

Next, observe that for $z \in \Omega$ such that $\gamma(n+1) \leq |z| < \gamma(n)$,  we have that  $\im\ell (z) \sim 2 \pi n$ and $|z| \sim 1/n$.  Hence
$H'(z) = (z \ell(z))^{-1} \in H^{\infty}(\Omega)$.  Since
  $$J_{g(0)} H' = H - H(g(0)),$$
$J_{g(0)}$ is not bounded on $H^{\infty}(\Omega)$, and $g \notin T[H^{\infty}]$ by Proposition \ref{univalent}.
\end{proof}

\begin{cor} \label{Tnotclosed}
$T[H^\infty]$ is a subalgebra of $H^\infty$, but is not closed.
\end{cor}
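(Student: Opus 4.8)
The plan is to treat the two assertions separately. For the first, I would simply assemble results already in hand: Proposition \ref{subinfty} gives $T[H^\infty]=S[H^\infty]\subseteq H^\infty$, and since $T_g$ is linear in the symbol $g$, the set $T[H^\infty]$ is a linear subspace of $H^\infty$. The identity $S_{g_1g_2}=S_{g_1}S_{g_2}$ noted in the introduction shows that if $g_1,g_2\in S[H^\infty]$ then $S_{g_1g_2}=S_{g_1}S_{g_2}$ is a composition of bounded operators, hence bounded, so $g_1g_2\in S[H^\infty]=T[H^\infty]$. Thus $T[H^\infty]$ is a subalgebra of $H^\infty$. This step is routine.

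For the second assertion—that $T[H^\infty]$ is not closed in the supremum norm—the idea is to produce a function lying in the $H^\infty$-closure of $T[H^\infty]$ but not in $T[H^\infty]$ itself. First I would observe that every polynomial $p$ lies in $\mathrm{BRV}$: its derivative $p'$ is bounded on $\overline{\D}$, so $\sup_\theta V(p,\theta)\le\sup_{\overline{\D}}|p'|<\infty$, and hence $p\in T[H^\infty]$ by Proposition \ref{ubrv}. Since polynomials are dense in the disc algebra $A$ in the supremum norm, it follows that $A$ is contained in the closure of $T[H^\infty]$ taken inside $H^\infty$.

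Finally I would invoke Proposition \ref{deathspiral}, which furnishes a univalent $g\in A$ with $g\notin T[H^\infty]$. Such a $g$ belongs to the closure of $T[H^\infty]$ but not to $T[H^\infty]$, so $T[H^\infty]$ is properly contained in its closure; that is, it is not closed. I do not expect any genuine obstacle here, since the substantive work was carried out in the preceding propositions; the only point requiring a moment's care is to note that the relevant closure is with respect to the $H^\infty$ norm and that polynomial approximation in $A$ takes place in precisely that norm.
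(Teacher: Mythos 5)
Your proposal is correct and follows essentially the same route as the paper: the algebra property comes from $T[H^\infty]=S[H^\infty]$ and the multiplicativity of $S_g$ in the symbol, and non-closedness comes from the density of polynomials (which lie in $T[H^\infty]$) in $A$ together with the function $g\in A\setminus T[H^\infty]$ from Proposition \ref{deathspiral}. The only difference is that you spell out why polynomials belong to $T[H^\infty]$ (via $\mathrm{BRV}$ and Proposition \ref{ubrv}), which the paper treats as obvious.
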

\begin{proof}  As noted in the Introduction, $S[X]$ is always an algebra.
Since $T[H^\infty]=S[H^\infty]$, $T[H^\infty]$ is a subalgebra of $H^\infty$.
Clearly every polynomial $p\in T[H^\infty]$.  If $T[H^\infty]$ were a closed
subspace of $H^\infty$, then it would contain $A$.
By Proposition \ref{deathspiral}, this is not the case, which finishes
the proof.
\end{proof}

\begin{rem}\label{deathspiral2}
Note that if $\Omega = \D \backslash \overline{ \{\gamma(t): t \geq 0 \}}$, where $\displaystyle{\gamma(t) = e^{2 \pi i t} / 2^t, \; t \geq 0}$, and $g$ is a conformal map from $\D$ onto $\Omega$, then $g \in T[H^{\infty}]$  by Proposition \ref{ubrv} and Theorem \ref{hypgeo}.
\end{rem}

Of course, when endowed with the supremum norm $A$ is a closed subspace of $H^\infty$, and it is natural to
consider how  $T[A]$ is related to $T[H^\infty]$.

\begin{prop} \label{TAsubTH}
$T[A] = T[H^{\infty}] \cap A$.
\end{prop}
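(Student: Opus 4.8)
The plan is to prove the two inclusions separately, and the key point is to understand how boundedness of $T_g$ on $A$ versus on $H^\infty$ interact when $g$ itself is required to lie in $A$.

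First I would prove $T[A]\subseteq T[H^\infty]\cap A$. If $g\in T[A]$, then taking $f=1\in A$ as in the proof of Proposition \ref{subinfty} gives $\|g-g(0)\|_\infty=\|T_g1\|_\infty\le\|T_g\|_A$, but more importantly $T_g1=g-g(0)\in A$ since $T_g$ maps $A$ to $A$; hence $g\in A$. It remains to see $g\in T[H^\infty]$. The natural idea is that $T_g$ bounded on the dense subspace $A$ of... no — $A$ is not dense in $H^\infty$. So instead I would use the fact that $T[H^\infty]=S[H^\infty]$ (Proposition \ref{subinfty}) together with \eqref{prod}: since $g\in A$, $M_g$ is bounded on $A$ and, $A$ being a subalgebra of $H^\infty$ with $g\in A\subseteq H^\infty$, also $M_g$ is bounded on $H^\infty$; from $M_gf=f(0)g(0)+T_gf+S_gf$ on $A$ and boundedness of $T_g$ and $M_g$ on $A$ we get $S_g$ bounded on $A$, i.e. $g\in S[A]$. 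Then I would invoke Lemma \ref{nec}(i) in the form used for Proposition \ref{S_gbdd}: actually the cleanest route is that $S_g$ is bounded on $A$ iff $g\in H^\infty$ — more precisely, $S[A]\supseteq$ ... I need $S[A]=S[H^\infty]$. Since $g\in A$, $S_gf(z)=\int_0^z f'(t)g(t)\,dt$; given $f\in H^\infty$, approximate by dilates $f_r(z)=f(rz)\in A$ with $\|f_r\|_\infty\le\|f\|_\infty$, and $S_gf_r\to S_gf$ uniformly on compacta, so $\|S_gf\|_\infty\le\liminf\|S_gf_r\|_\infty\le\|S_g\|_A\|f\|_\infty$. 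Hence $g\in S[H^\infty]=T[H^\infty]$, and $g\in A$, giving the first inclusion.

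For the reverse inclusion $T[H^\infty]\cap A\subseteq T[A]$, suppose $g\in T[H^\infty]$ and $g\in A$. I must show $T_g$ maps $A$ into $A$ and is bounded there; boundedness on $A$ is immediate from boundedness on $H^\infty$ since $A\subseteq H^\infty$ isometrically, so the only content is that $T_gf\in A$ whenever $f\in A$. For this I would argue by the standard density–equicontinuity argument: polynomials are dense in $A$, and for a polynomial $p$ we have $T_gp(z)=\int_0^z p(t)g'(t)\,dt$; this need not obviously be in $A$ unless $g'\in H^1$ or $g$ has some boundary regularity, so a more robust approach is needed. The better route: write $T_gf=M_gf-f(0)g(0)-S_gf$ via \eqref{prod}. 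Since $g\in A$ and $f\in A$, $M_gf=fg\in A$. So it suffices to show $S_gf\in A$ for $f\in A$. For a polynomial $p$, $S_gp(z)=\int_0^z p'(t)g(t)\,dt$ is the integral of a function continuous on $\overline{\D}$ (as $p'g\in A$), hence extends continuously to $\overline{\D}$, so $S_gp\in A$; then since $S_g$ is bounded on $A$ (it is bounded on $H^\infty$ as $g\in H^\infty$, by the dilation argument above, hence on $A$) and polynomials are dense in $A$, $S_gf\in A$ for all $f\in A$. Therefore $T_gf=fg-f(0)g(0)-S_gf\in A$, and $T_g$ is bounded on $A$, giving $g\in T[A]$.

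The main obstacle, and the step to handle carefully, is the claim $S[H^\infty]=S[A]$ (equivalently, that boundedness of $S_g$ transfers between $A$ and $H^\infty$ when $g\in H^\infty$); the dilation argument $f_r(z)=f(rz)$ together with uniform convergence of $S_gf_r$ to $S_gf$ on compact subsets of $\D$ and the bound $\|S_gf_r\|_\infty\le\|S_g\|_A$ is the crux, and one should note that $f_r\to f$ does not hold in $H^\infty$-norm but that lower semicontinuity of the sup norm under locally uniform limits is all that is needed. Everything else reduces to the algebra identity \eqref{prod} and the already-established facts $M[H^\infty]=H^\infty$, $M[A]=A$, and $T[H^\infty]=S[H^\infty]$.
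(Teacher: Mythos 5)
Your proof is correct in substance and follows essentially the same strategy as the paper: the dilation $f_r(z)=f(rz)$, the bound $\|f_r\|_\infty\le\|f\|_\infty$, and the pointwise convergence of the integrals handle the passage from boundedness on $A$ to boundedness on $H^\infty$, while the identity \eqref{prod} together with closedness of $A$ (you use density of polynomials where the paper uses the dilates $f_r$ themselves) handles the reverse inclusion. The only structural difference is cosmetic: the paper applies the dilation argument directly to $T_g$ in the forward direction, whereas you detour through $S_g$ via \eqref{prod}; both work. One justification you give is wrong as stated and should be repaired: in the reverse inclusion you assert that $S_g$ ``is bounded on $H^\infty$ as $g\in H^\infty$,'' but membership of $g$ in $H^\infty$ alone does not imply this --- indeed $S[H^\infty]=T[H^\infty]\subsetneqq H^\infty$ by Propositions \ref{subinfty} and \ref{deathspiral}. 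The fact you need is nevertheless available: the hypothesis $g\in T[H^\infty]=S[H^\infty]$ gives boundedness of $S_g$ on $H^\infty$ directly, and with that substitution your argument is complete.
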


\begin{proof}
Suppose $g \in T[A]$.  Applying $T_g$ to 1 shows $g \in A$.  Let $f \in H^{\infty}$ and let $f_r(z) = f(rz)$ for $0 < r < 1$.  We have
  $$f_r \in A \text{ and } \|f_r\|_{\infty} \leq \|f\|_{\infty}, \quad 0 < r < 1.$$
For $z \in D$, $\lim_{r \to 1} \int_0^z f_r(t)g'(t) \, dt = \int_0^z f(t)g'(t)\, dt$ by the Bounded Convergence Theorem.  Hence
\begin{align*}
  |T_gf(z)| = \left| \lim_{r \to 1} T_gf_r(z) \right|
  \leq \|T_g\| \|f\|_{\infty}.
\end{align*}
Therefore $T_g$ is bounded on $H^{\infty}$, which completes the proof that $T[A] \subseteq T[H^{\infty}] \cap A$.

To prove the reverse inclusion, suppose $g \in T[H^{\infty}] \cap A$ and let $f\in A$.
Let $0<r<1$, so $f_r$ extends to be analytic across the unit circle.
Then $g\in A$ implies that the functions $S_gf_r$ and $ M_gf_r$ are in $A$.
Hence $T_g f_r= M_gf_r-S_g f_r-g(0)f(0)\in A$.

Also,
  $$\lim_{r \to 1^-} \|T_gf - T_g f_r\|_{\infty}
  \leq \lim_{r \to 1^-} \|T_g\|_{H^\infty} \|f - f_r\|_{\infty} = 0.$$
Since $A$ is a closed subspace of $H^\infty$, it follows that $T_g f\in A$.
Hence $g \in T[A]$, completing the proof.
\end{proof}

\section{$T_o[H^{\infty}]$}

We now discuss compactness, beginning with a characterization of when one of the operators
we are studying is compact on $H^\infty$.  First, we introduce the notation
 $$
 \overline{B} = \{f \in H^{\infty}: \| f\|_{\infty} \leq 1\}
 $$
 for the closed unit ball of $H^{\infty}$.

\begin{prop} \label{cptcriterion}
Let $L$ be one of the operators $T_g$, $S_g$, or $ M_g$ acting on $H^\infty$.
If $L$ is bounded, then the following are equivalent:
\begin{enumerate}
  \item[(i)]
 $L$ is compact on $H^{\infty}$;

\item[(ii)] If $\{f_n\}\subseteq\overline{B}$ and $f_n(z) \to 0$ locally uniformly in $\D$, then $\|Lf_n\|_{\infty} \to 0$.
\end{enumerate}

\end{prop}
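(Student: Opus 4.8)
The plan is to exploit normal families. The closed unit ball $\overline{B}$ is a normal family and is closed under locally uniform limits, so compactness of a bounded operator $L$ on $\overline{B}$ can be detected by testing only on sequences that converge locally uniformly to $0$. The one elementary fact underlying both implications is this: if $h_n\to 0$ locally uniformly on $\D$, then $Lh_n\to 0$ locally uniformly, for each of $L=T_g,S_g,M_g$. Note that boundedness of $L$ forces $g\in H^\infty$ by Proposition \ref{subinfty}. For $M_g$ the claim is immediate since $g\in H^\infty$. For $T_g$, fix $0<r<1$; for $|z|\le r$ the segment $[0,z]$ lies in $\{|w|\le r\}$, so $|T_gh_n(z)|\le \sup_{|w|\le r}|h_n(w)|\cdot\sup_{|w|\le r}|g'(w)|\to 0$. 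For $S_g$ one argues the same way, using that $h_n'\to 0$ locally uniformly (by Cauchy's estimates) together with $g\in H^\infty$.

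For (i) $\Rightarrow$ (ii) I would argue by contradiction. Suppose $\{f_n\}\subseteq\overline{B}$, $f_n\to 0$ locally uniformly, but $\|Lf_n\|_\infty\not\to 0$; pass to a subsequence with $\|Lf_{n_k}\|_\infty\ge\eps>0$. Compactness of $L$ yields a further subsequence with $Lf_{n_{k_j}}\to h$ in the norm of $H^\infty$. Norm convergence implies locally uniform convergence, while the first step gives $Lf_{n_{k_j}}\to 0$ locally uniformly; hence $h\equiv 0$. But $\|h\|_\infty=\lim_j\|Lf_{n_{k_j}}\|_\infty\ge\eps$, a contradiction.

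For (ii) $\Rightarrow$ (i) it suffices to show that every sequence $\{Lf_n\}$ with $f_n\in\overline{B}$ has a subsequence converging in $H^\infty$-norm. By Montel's theorem there is a subsequence $f_{n_k}\to f$ locally uniformly, and $f\in\overline{B}$ since $\overline{B}$ is closed under locally uniform limits. Then $\tfrac12(f_{n_k}-f)\in\overline{B}$ and converges to $0$ locally uniformly, so (ii) gives $\|Lf_{n_k}-Lf\|_\infty\to 0$, where $Lf\in H^\infty$ because $L$ is bounded. Thus $L(\overline{B})$ is relatively compact and $L$ is compact.

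I do not expect a serious obstacle here: the only real work is the bookkeeping of the first step — verifying that all three operators carry locally uniformly null sequences to locally uniformly null sequences — which comes down to $g\in H^\infty$ and the boundedness of $g'$ on compact subsets of $\D$. Everything else is the standard normal-families argument, and the hypothesis that $L$ is bounded is used only to guarantee $Lf\in H^\infty$ and to invoke compactness in the forward direction.
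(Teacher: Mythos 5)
Your proof is correct and follows essentially the same route as the paper: the key observation that each of $T_g$, $S_g$, $M_g$ maps locally uniformly null sequences in $\overline{B}$ to locally uniformly null sequences, combined with the standard normal-families/subsequence argument in both directions. The only (minor) refinement is your rescaling by $\tfrac12$ so that $f_{n_k}-f$ actually lies in $\overline{B}$ before invoking (ii); the paper applies (ii) to $h_{n_k}-h$ directly, which has norm at most $2$, so your version is slightly more careful on that point.
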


\begin{proof}  Assume that  $\{f_n\}\subseteq\overline{B}$, that $f_n(z) \to 0$
locally uniformly in $\D$, and that $L$ is compact.  To prove (ii) holds,
by a standard argument it suffices to
show that there is a subsequence $\{f_{n_k}\}$ such that $\|L f_{n_k}\|_\infty\to 0$.
It is easy to see from the definitions of the operators
that if $L$ is one of $T_g$, $S_g$, or $ M_g$, then $Lf_n(z)\to0$
locally uniformly in $\D$.
Since $L$ is compact, there is a subsequence $\{f_{n_k}\}$
and $h\in H^\infty$ such that $\|L f_{n_k}- h\|_\infty\to 0$.  Since $L f_{n_k}(z)\to0$
locally uniformly in $\D$, $h=0$ and hence $\|L f_{n_k}\|_\infty\to 0$.  This completes the
proof that (i) implies (ii).

Next, assume that (ii) holds and let $\{h_n\} \subseteq\overline{B}$.  Then
 $\{h_n\}$ is a normal family, and hence there exists $h \in H^{\infty}$ and a subsequence $\{h_{n_k}\}$
such that $h_{n_k} \to h$ locally uniformly in $\D$.
Let $\{f_k\} = \{h_{n_k} - h\}$.
By (ii), $\|Lf_k\|_{\infty} = \|Lh_{n_k} - Lh\|_{\infty} \to 0$.  Since by assumption
$L$ is bounded, $Lh\in H^\infty$. Hence $Lh_{n_k}$ converges in
$H^{\infty}$ to $Lh$, which completes the proof that $L$ is compact.
\end{proof}

 \begin{rem}
 {\rm We remark that}
\begin{enumerate}
\item[(a)]  The sufficiency of condition  (ii) in Proposition \ref{cptcriterion} for $L$ to be compact on $H^\infty$ is valid for
any bounded linear operator.
 \item[(b)] The necessity of condition (ii) in Proposition \ref{cptcriterion} for $L$ to be compact on $H^\infty$ is not valid for general bounded linear operators.
 \end{enumerate}
 \end{rem}

  Indeed, the proof of sufficiency given above is valid for any operator.
  For an example showing that necessity fails in general, start with the evaluation
  functional $\Lambda$ defined
 on the disk algebra $A$ by $\Lambda f=f(1)$.  Then $\Lambda$ is a norm 1 linear functional on $A$, and by
 the Hahn-Banach Theorem can be extended to a linear functional $\hat \Lambda$ on $H^\infty$ with
 $\|\hat \Lambda\|=1$.  Let $L$ be the operator on $H^\infty$ that takes $f\in H^\infty$ to the constant
 function with constant value $\hat \Lambda f$.  Then $L$ is a rank-one operator, and hence compact.
 But $L(z^n)=  \Lambda (z^n)=1$ for all positive integers $n$, and hence condition (ii) fails.

\medskip

As with boundedness, if two of the operators $M_g$, $S_g$ and $T_g$ are compact, then so is the third.  Since $M_o[H^{\infty}] = \{0\}$, it is not surprising that the same is true for $S_g$.

\begin{prop} \label{S_o}
$S_o[H^\infty]=S_o[A]=\{0\}$.
\end{prop}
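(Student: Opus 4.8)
The plan is to show that if $S_g$ is compact on $H^\infty$ (or on $A$), then $g \equiv 0$. Since compactness implies boundedness, Proposition~\ref{S_gbdd} gives $g \in H^\infty$, so the real content is forcing $g$ to vanish identically. The natural tool is Proposition~\ref{cptcriterion}(ii) together with the pointwise estimate from Lemma~\ref{nec}(i), namely $|g(z)| \le \|S_g\| \cdot \|\lambda_z'\|_{H^\infty}/\|\lambda_z'\|_{H^\infty} = \|S_g\|$ is too crude; instead I want to exploit a sequence of test functions concentrating near a boundary point to show $g$ must decay there.

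First I would fix a point $w \in \D$ and consider the standard normalized test functions $f_w(z) = \dfrac{(1-|w|^2)}{(1-\overline{w}z)^2} \cdot \dfrac{1}{?}$ — more precisely, I would take functions in $\overline{B}$ whose derivative is large at $w$ while the functions themselves tend to $0$ locally uniformly as $|w| \to 1$. A clean choice: for a sequence $w_n \to \zeta \in \partial\D$, let
\[
f_n(z) = \frac{(1-|w_n|^2)\,(\overline{w_n})}{(1-\overline{w_n}z)^2}\cdot\frac{1}{|w_n|}\,,
\]
so that $\|f_n\|_\infty$ is bounded, $f_n \to 0$ locally uniformly in $\D$, and $|f_n'(w_n)| \asymp \dfrac{1}{1-|w_n|^2}$. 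Applying Proposition~\ref{cptcriterion}(ii), compactness of $S_g$ gives $\|S_g f_n\|_\infty \to 0$. On the other hand, Lemma~\ref{nec} applied with the point $z = w_n$ (or more directly, evaluating $(S_g f_n)'(w_n) = f_n'(w_n) g(w_n)$ and using $\|\lambda_{w_n}'\|_{H^\infty} = 1/(1-|w_n|^2)$) yields
\[
|g(w_n)| = \frac{|(S_g f_n)'(w_n)|}{|f_n'(w_n)|} \le \|\lambda_{w_n}'\|_{H^\infty}\,\|S_g f_n\|_\infty \asymp \frac{\|S_g f_n\|_\infty}{1-|w_n|^2}\cdot(1-|w_n|^2) \to 0.
\]
Hence $g(w_n) \to 0$ along any sequence approaching the boundary. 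Since this holds for every boundary-approaching sequence, $|g(z)| \to 0$ as $|z| \to 1$, and because $g \in H^\infty$ is analytic with boundary values $0$ a.e., the maximum principle forces $g \equiv 0$.

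The main obstacle is getting the constants in the test-function estimate to line up so that the $1-|w_n|^2$ factors genuinely cancel, leaving only $\|S_g f_n\|_\infty \to 0$ on the right — i.e., verifying that $|f_n'(w_n)|$ is comparable to $\|\lambda_{w_n}'\|_{H^\infty}$ up to an absolute constant, so no blow-up survives. This is a routine Schwarz–Pick computation but must be done carefully. For the case of $A$: the functions $f_n$ above are already in the disc algebra, so the identical argument applies verbatim, giving $S_o[A] = \{0\}$ as well. (Alternatively, one can simply note $S_o[A] \subseteq S_o[H^\infty]$ once one checks, as in Proposition~\ref{TAsubTH}, that compactness on the dense-in-norm subfamily transfers — but the direct argument with $f_n \in A$ is cleaner.)
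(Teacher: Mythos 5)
Your overall strategy is legitimate and genuinely different from the paper's, but as written it contains a concrete error in the test functions. The functions
\[
f_n(z)=\frac{(1-|w_n|^2)\,\overline{w_n}}{(1-\overline{w_n}z)^2}\cdot\frac{1}{|w_n|}
\]
are \emph{not} uniformly bounded in $H^\infty$: their supremum norm is $\frac{1-|w_n|^2}{(1-|w_n|)^2}=\frac{1+|w_n|}{1-|w_n|}\to\infty$. (These kernels are normalized for $H^1$, not $H^\infty$.) Moreover $|f_n'(w_n)|\asymp (1-|w_n|^2)^{-2}$, not $(1-|w_n|^2)^{-1}$ as you claim. So the sequence does not lie in $\overline{B}$ and Proposition~\ref{cptcriterion}(ii) cannot be applied to it. This is exactly the spot you flagged as ``must be done carefully,'' and it is where the argument currently fails. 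Note also that by the Schwarz--Pick lemma no $f\in\overline{B}$ can have $|f'(w)|$ exceed $(1-|w|^2)^{-1}$, so you are working at the extremal edge and the choice of test function matters.

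The fix is easy: use the first-power kernel $h_n(z)=\tfrac12\,\dfrac{1-|w_n|^2}{1-\overline{w_n}z}$ (or simply normalize your $f_n$ by its sup norm). Then $\|h_n\|_\infty\le 1$, $h_n\to 0$ locally uniformly, and $|h_n'(w_n)|=\dfrac{|w_n|}{2(1-|w_n|^2)}$, which is comparable to $\|\lambda_{w_n}'\|_{H^\infty}$ for $|w_n|\ge 1/2$. The chain $|g(w_n)|\,|h_n'(w_n)|=|(S_gh_n)'(w_n)|\le \|\lambda_{w_n}'\|_{H^\infty}\|S_gh_n\|_\infty$ then yields $|g(w_n)|\le \frac{2}{|w_n|}\|S_gh_n\|_\infty\to 0$, so $|g(z)|\to 0$ as $|z|\to 1^-$ and the maximum principle gives $g\equiv 0$. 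Since the $h_n$ are rational with poles off $\overline{\D}$, they lie in $A$ and the same argument handles $S_o[A]$ (using the $A$-version of the compactness criterion, whose proof is identical). For comparison, the paper's proof avoids test-function asymptotics entirely: assuming $\|g\|_\infty=1$ with $g$ nonconstant, it applies $S_g$ to the powers $g^n$, which tend to $0$ locally uniformly, and notes that $S_g g^n=\frac{n}{n+1}\bigl(g^{n+1}-g^{n+1}(0)\bigr)$ has norm bounded away from $0$, contradicting Proposition~\ref{cptcriterion}(ii). That argument is shorter and purely algebraic; your (repaired) argument has the merit of showing directly that a compact $S_g$ forces $g$ to vanish at the boundary.
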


\begin{proof}
If $S_g$ is compact on $H^{\infty}$,  then $S_g$ is bounded and $g \in H^{\infty}$ by Proposition \ref{S_gbdd}.  If $g\neq0$, then without loss of generality we may assume $\|g\|_{\infty} = 1$.  Note that $g$ is not constant, for otherwise $S_g$ would be a constant multiple of a rank-1 perturbation of the identity operator, which is not compact.  Thus, the functions $g^n$, $n = 1,2,...$ converge locally uniformly to 0 in $\D$.  We have
\begin{align*}
  S_g g^n(z) = \int_0^z g(t) (g^n)'(t)\, dt
  =\frac{n}{n+1} g^{n+1}(z) - \frac{n}{n+1}g^{n+1}(0).
\end{align*}
Since $g^{n+1}(0) \to 0$, as $n \rightarrow \infty$,  but $\|g^{n+1}\|_{\infty} = 1$ for all $n$, this violates the condition for compactness
Proposition \ref{cptcriterion} (ii).  Hence $g=0$, showing that $S_o[H^\infty]=\{0\}$.

The same proof shows that $S_o[A]=\{0\}$, and so will be omitted.
\end{proof}

Although $S_o[H^\infty]=M_o[H^\infty]=\{0\}$,
there are non constant functions $g$ for which $T_g$ is compact.  We now introduce notation
for spaces of functions which we will show have this property.  Let
$$
H^1_1=\{g\in H(\D)\,:\, g'\in H^1\}
$$
so, for example, a conformal map from $\D$ to a domain with rectifiable boundary belongs to $H^1_1$;
see \cite[Theorem 3.12]{Dur}.  Let
$$
\ell^1(\D)=\left\{g(z)=\sum_{n=0}^\infty a_n z^n\,:\,\sum_{n=0}^\infty |a_n|<\infty \right\}
$$
denote those functions holomorphic on $\D$ with absolutely convergent Fourier series.
Next, we say that the derivative $g'$ of a function $g\in H(\D)$ is uniformly
integrable on radii if: Given $\eps > 0$, there exists $r < 1$
independent of $\theta$ such that
\begin{equation} \label{UI}
\int_r^1 |g'(te^{i\theta})|\, dt \leq  \eps , \qquad 0 \leq \theta < 2\pi.
\end{equation}
We use this condition to define the final space of functions we consider:
$$
\mathcal U=\{g\in H(\D)\,:\, g'\text{ is uniformly integrable on radii}\}.
$$

\begin{prop} \label{cptsuff}
$H^1_1\subsetneqq \ell^1(\D)\subsetneqq \mathcal U\subseteq T_o[H^\infty]$.

\end{prop}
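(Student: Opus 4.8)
The plan is to establish the chain $H^1_1\subsetneqq \ell^1(\D)\subsetneqq \mathcal U\subseteq T_o[H^\infty]$ by proving each containment separately, reserving the most substantial work for the last one.

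\emph{Step 1: $H^1_1\subseteq \ell^1(\D)$, and properness.} If $g'\in H^1$ with $g'(z)=\sum_{n\ge 1} n a_n z^{n-1}$, then the classical fact that the Taylor coefficients of an $H^1$ function are summable after dividing by $n$ (equivalently, that $H^1\subseteq \ell^1$ after the standard coefficient manipulation — more precisely, Hardy's inequality gives $\sum_{n\ge 1}|na_n|/n<\infty$) yields $\sum_n |a_n|<\infty$, so $g\in\ell^1(\D)$. For properness, exhibit a lacunary-type or explicitly constructed $g\in\ell^1(\D)$ whose derivative is not in $H^1$; a standard choice is $g(z)=\sum_k 2^{-k} z^{2^k}$, where $\sum|a_n|=\sum 2^{-k}<\infty$ but $g'$ has lacunary coefficients $2^{-k}2^k=1$ at frequencies $2^k-1$, hence $g'\notin H^1$ (an $H^1$ function with lacunary series must be in $H^2$, forcing square-summable coefficients).

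\emph{Step 2: $\ell^1(\D)\subseteq \mathcal U$, and properness.} If $g\in\ell^1(\D)$ then $g$ extends continuously to $\overline\D$ and, crucially, one controls the tail of the radial integral uniformly: write $g'(te^{i\theta})=\sum_{n\ge 1} n a_n t^{n-1}e^{i(n-1)\theta}$ and estimate $\int_r^1|g'(te^{i\theta})|\,dt$. The naive bound $\sum n|a_n|\int_r^1 t^{n-1}dt=\sum|a_n|(1-r^n)$ does not obviously go to $0$, so I would instead split: for the finite initial segment $n\le N$, the corresponding partial sum is a polynomial whose radial integral over $[r,1]$ tends to $0$ as $r\to1$ uniformly in $\theta$; for the tail $n>N$, bound $\int_r^1|g'|\,dt\le \int_0^1\sum_{n>N} n|a_n|t^{n-1}\,dt=\sum_{n>N}|a_n|$, which is small for $N$ large. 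This two-parameter argument gives uniform integrability on radii. Properness follows by taking a conformal map onto a domain with rectifiable but non-rectifiably-approachable boundary, or more simply a function in $\mathcal U\setminus \ell^1(\D)$: since $\mathcal U$ is a $BRV$-type condition it contains functions with wild coefficient behavior that nonetheless have small radial tails (e.g. a suitable singular-inner-type or slowly-growing example); one can also use the comb-domain map from Proposition~\ref{gnotinA}, which lies in $\mathcal U$ but not in $A$, hence not in $\ell^1(\D)$.

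\emph{Step 3: $\mathcal U\subseteq T_o[H^\infty]$ — the main obstacle.} First note $\mathcal U\subseteq \rm{BRV}\subseteq T[H^\infty]$ by Proposition~\ref{ubrv}, so $T_g$ is bounded. To get compactness I will verify condition (ii) of Proposition~\ref{cptcriterion}: given $\{f_n\}\subseteq\overline B$ with $f_n\to 0$ locally uniformly, show $\|T_gf_n\|_\infty\to 0$. Fix $\eps>0$ and use uniform integrability of $g'$ on radii to pick $r<1$ with $\int_r^1|g'(te^{i\theta})|\,dt\le\eps$ for all $\theta$. For any $z=Re^{i\theta}\in\D$, split the defining integral $T_gf_n(z)=\int_0^R f_n(te^{i\theta})g'(te^{i\theta})e^{i\theta}\,dt$ at $t=r$: the tail piece (from $r$ to $R$) is bounded in modulus by $\|f_n\|_\infty\cdot\eps\le\eps$ uniformly in $n$ and $z$; the head piece (from $0$ to $\min(R,r)$) is bounded by $\bigl(\sup_{|w|\le r}|f_n(w)|\bigr)\cdot\sup_\theta V(g,\theta)$, which tends to $0$ as $n\to\infty$ by local uniform convergence since $\sup_\theta V(g,\theta)<\infty$. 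Hence $\limsup_n\|T_gf_n\|_\infty\le\eps$, and letting $\eps\to0$ finishes the proof. The delicate point — and the reason I expect Step 3 to be where care is needed — is making the splitting uniform in both $\theta$ and the endpoint $R$ simultaneously; the uniform-in-$\theta$ choice of $r$ built into the definition of $\mathcal U$ is exactly what makes this work, which is presumably why the space $\mathcal U$ was introduced.
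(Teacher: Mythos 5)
Your Steps 1 and 3, and the inclusion $\ell^1(\D)\subseteq\mathcal U$ in Step 2, follow essentially the same route as the paper (Hardy's inequality plus the lacunary example $\sum 2^{-k}z^{2^k}$; the polynomial-head/coefficient-tail splitting; and the split of the radial integral at the uniform $r$ from \eqref{UI} combined with Proposition \ref{cptcriterion}). Your remark that one should first check boundedness of $T_g$ via $\mathcal U\subseteq\rm{BRV}$ before invoking Proposition \ref{cptcriterion} is a point the paper leaves implicit, and your Paley-type justification that a lacunary series in $H^1$ has square-summable coefficients is a valid alternative to the paper's Riemann--Lebesgue argument. All of that is fine.

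There is, however, a genuine error in your properness argument for $\ell^1(\D)\subsetneqq\mathcal U$. You propose the comb-domain map $g$ of Proposition \ref{gnotinA} as an element of $\mathcal U\setminus\ell^1(\D)$, asserting that it ``lies in $\mathcal U$ but not in $A$.'' No function can do both: if $g\in\mathcal U$, then $g(re^{i\theta})$ converges uniformly in $\theta$ as $r\to1$ (the tail $\int_r^1|g'(te^{i\theta})|\,dt$ is uniformly small), which forces $g$ to extend continuously to $\overline\D$, i.e.\ $\mathcal U\subseteq A$; this is also forced by the chain $\mathcal U\subseteq T_o[H^\infty]\subseteq A$ proved in Theorem \ref{TosubsetA}. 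The comb map is in $\rm{BRV}$ but precisely \emph{not} in $A$, so it is not in $\mathcal U$ --- indeed the whole point of that example is that bounded radial variation does not give uniform smallness of the tails. Your other suggestion (``a suitable singular-inner-type'' example) fails for the same reason, since a nonconstant singular inner function is not in $A$. The paper's example is different and does work: by a classical construction (\cite[Chap.\ 5, Ex.\ 7]{Dur}) there is $f\in H(\D)$ with $|f'(z)|\le C(1-|z|)^{-1/2}$ but $f\notin\ell^1(\D)$; the derivative bound gives $\int_r^1|f'(te^{i\theta})|\,dt\le 2C(1-r)^{1/2}$ uniformly in $\theta$, so $f\in\mathcal U$. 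You should replace your proposed example with one of this kind; as it stands, the strictness of the middle inclusion is not established.
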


Before giving the proof we note that the Fej\'er-Riesz inequality \cite[Theorem 3.13]{Dur}
tells us that
$$
\int_0^1 |g'(te^{i\theta})|\, dt \leq \pi\|g'\|_{H^1}.
$$
Thus the inclusion $H^1_1\subseteq \mathcal U$ can be viewed as a uniform integrability version
of this classical inequality.

\begin{proof}

Let  $g(z) = \sum_0^{\infty} a_k z^k$, so $g'(z) = \sum_0^{\infty} (k+1)a_{k+1} z^k$.  By Hardy's inequality \cite[p. 48]{Dur}, $g' \in H^1$ implies
\begin{align*}
  \sum_{k =1}^{\infty} |a_k|
  = \sum_{k=0}^{\infty} \frac{(k+1) |a_{k+1}|}{k+1}
  \leq \pi \| g' \|_{H^1}.
\end{align*}
Hence $H^1_1\subseteq \ell^1(\D)$.  To see that the inclusion is proper, consider the
lacunary series $g(z) = \sum_0^\infty 2^{-k} z^{2^k}$.  Then $g\in\ell^1(\D)$, while
the Riemann-Lesbesgue Lemma shows that $g'(z) = \sum_0^\infty z^{2^k-1}$ is not in $H^1$.

Next, let $g(z) = \sum_{0}^{\infty} a_k z^k\in \ell^1(\D)$, and
let $\eps > 0$.  Set $b_k = (k+1)a_{k+1}$, so $g'(z) = \sum_{0}^{\infty} b_kz^k$. Since
$g\in \ell^1(\D)$,
there exists a positive integer $N$ such that
  $$\sum_{k=N}^{\infty}\frac{|b_k|}{k+1} < \frac{\eps}{2}.$$
Let
  $$p_N(z) = \sum_{k=0}^{N-1} b_kz^k, $$
and choose $r < 1$ such that
  $$\int_r^1 |p_N(te^{i\theta})|\, dt < \eps/2,\qquad 0 \leq \theta < 2\pi.$$
Then for $0 \leq \theta < 2\pi$ we have
\begin{align*}
\int_r^1| g'(te^{i\theta})| dt
&\leq \int_r^1 \left| p_N(te^{i\theta}) \right| \, dt + \int_r^1  \sum_{k=N}^{\infty} |b_k| t^k \, dt \\
&< \frac{\eps}{2} + \sum_{k=N}^{\infty} \frac{1 - r^{k+1}}{k+1} |b_k| < \eps.
\end{align*}
Hence (\ref{UI}) is satisfied, which completes the proof that $\ell^1(\D)\subseteqq\mathcal U$.
To see that this inclusion is proper, it is known that there exists $f\in H(\D)$ such that
$|f'(z)|\le C(1-|z|)^{-1/2}$, $|z|<1$, and hence $f\in \mathcal U$, but $f\notin\ell^1(\D)$; see
\cite[Chapt. 5 ex. 7]{Dur}.

To prove the final inclusion, let $g\in \mathcal U$.
If $g' = 0$ then $g\in T_o[H^\infty]$ trivially, so assume $g' \neq 0$.  Let $\eps > 0$, and choose $r, 1- \eps < r < 1,$ such that (\ref{UI}) holds. If $\{f_n\} \subseteq\overline{B}$ and $f_n \to 0$ uniformly on compact subsets of $\D$, then there exists $N$ such that
  $$|f_n(z)| < \frac{\eps}{\sup \{|g'(z)|: |z| \leq r\}}, \quad |z| \leq r, \quad n > N.$$
Then for $n > N$,
\begin{align*}
  \|T_gf_n\|_{\infty} &=\sup_{\theta} \left| \int_0^1 f_n(te^{i\theta})g'(te^{i\theta})\, dt \right|\\
  &\leq \sup_{\theta} \left( \int_0^r | f_n(te^{i\theta})g'(te^{i\theta})|\, dt + \int_r^1 |f_n(te^{i\theta})g'(te^{i\theta})| \, dt\right)\\
  &\leq  \eps r + \eps.
\end{align*}
Thus, $\|T_gf_n\|_{\infty} \to 0$, and $T_g$ is compact by Proposition \ref{cptcriterion}.
\end{proof}


\begin{thm} \label{TosubsetA}
$T_o[H^\infty]\subsetneqq A$.
\end{thm}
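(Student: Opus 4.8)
The statement has two halves, the containment $T_o[H^\infty]\subseteq A$ and its properness, and I would dispose of properness first since it is already essentially available. Proposition \ref{deathspiral} produces a univalent $g\in A$ for which $T_g$ is \emph{not} bounded on $H^\infty$; as every compact operator is bounded, $T_g$ is certainly not compact, so $g\notin T_o[H^\infty]$. Thus $g\in A\setminus T_o[H^\infty]$, which shows the inclusion is proper. All the real content is therefore in proving the containment $T_o[H^\infty]\subseteq A$.

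For the containment, fix $g\in T_o[H^\infty]$. Since $T_o[H^\infty]\subseteq T[H^\infty]\subseteq H^\infty$ by Proposition \ref{subinfty}, we know $g\in H^\infty$, and it remains only to show $g$ extends continuously to $\overline{\D}$. I would argue by contradiction: assume $g\notin A$, so $g$ is not uniformly continuous on $\D$. Because $g$ is automatically uniformly continuous on every compact subset of $\D$, there must be a boundary point $\zeta$ at which the cluster set of $g$ contains two distinct values; that is, there are sequences $z_n\to\zeta$ and $w_n\to\zeta$ with $g(z_n)\to\alpha$, $g(w_n)\to\beta$, and $\eps_0:=|\alpha-\beta|>0$. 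The plan is to build test functions $f_n\in\overline B$ with $f_n\to0$ locally uniformly but with $\|T_gf_n\|_\infty$ bounded away from $0$, contradicting the compactness criterion of Proposition \ref{cptcriterion}(ii).

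The test functions would be peak functions concentrated at $\zeta$, for instance $f_n(z)=\left(\frac{1+\overline\zeta z}{2}\right)^{m_n}$ with $m_n\to\infty$ chosen to grow slowly; these satisfy $\|f_n\|_\infty\le1$ and $f_n\to0$ locally uniformly, and if $m_n$ grows slowly enough relative to the rate at which $z_n,w_n\to\zeta$, then $f_n\to1$ uniformly on a short path $\Gamma_n$ joining $w_n$ to $z_n$ near $\zeta$. Since $T_gf_n$ is analytic with $(T_gf_n)'=f_ng'$, its increment is path independent, so
\begin{equation*}
T_gf_n(z_n)-T_gf_n(w_n)=\int_{\Gamma_n}f_n(t)g'(t)\,dt=\big(g(z_n)-g(w_n)\big)+\int_{\Gamma_n}\big(f_n(t)-1\big)g'(t)\,dt,
\end{equation*}
using $\int_{\Gamma_n}g'=g(z_n)-g(w_n)\to\alpha-\beta$. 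If the final error term tends to $0$, then $\|T_gf_n\|_\infty\ge\frac12|T_gf_n(z_n)-T_gf_n(w_n)|\to\frac{\eps_0}{2}>0$, the desired contradiction.

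The main obstacle is controlling that error term. The only pointwise information on $g'$ is the Bloch bound $|g'(t)|\le\|T_g\|/(1-|t|^2)$ coming from Lemma \ref{nec}(ii) (with $X=Y=H^\infty$), so $|g'|$ may grow like $1/(1-|t|)$ along $\Gamma_n$, and $\int_{\Gamma_n}(f_n-1)g'$ is governed by the competition between $\sup_{\Gamma_n}|f_n-1|\to0$ and the possibly large $\int_{\Gamma_n}|g'|$. The delicate point is to choose the path $\Gamma_n$ and the peaking rate $m_n$ so that this product vanishes; this is where the tangential-versus-radial geometry of how $z_n,w_n$ approach $\zeta$ enters. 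I would stress that boundedness of $T_g$ alone yields only that $g$ lies in the (little) Bloch space, which does \emph{not} force $g\in A$—there exist bounded analytic functions, indeed infinite Blaschke products, with $(1-|z|^2)|g'(z)|\to0$ that are not continuous on $\overline{\D}$. It is therefore essential that compactness supplies the uniform smallness $\|T_gf_n\|_\infty\to0$ across the entire peaking family, and this is exactly what must be leveraged to defeat the oscillation of $g$ at $\zeta$.
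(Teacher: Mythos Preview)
Your treatment of properness via Proposition \ref{deathspiral} is fine and matches the paper. The containment argument, however, has a genuine gap that you yourself flag but do not close: the estimate on the error term $\int_{\Gamma_n}(f_n-1)g'$ is never established. You observe that the only pointwise control on $g'$ is the Bloch bound $|g'(t)|\lesssim(1-|t|)^{-1}$ from Lemma \ref{nec}(ii), and you correctly note that this comes from boundedness alone, not compactness. But then the competition you describe---$\sup_{\Gamma_n}|f_n-1|$ against $\int_{\Gamma_n}|g'|$---is governed only by that Bloch bound, and you give no argument that the product tends to zero. The sequences $z_n,w_n$ witnessing the failure of uniform continuity are handed to you, with no a priori constraint forcing them to approach $\zeta$ in a geometrically tame way; if their hyperbolic distance is unbounded, the Bloch estimate on $\int_{\Gamma_n}|g'|$ blows up along any connecting path, and slowing $m_n$ cannot repair this since $m_n\to\infty$ is required for $f_n\to0$ locally uniformly. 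Your final paragraph acknowledges that compactness must be ``leveraged'' at this step, but never says how; as written, compactness enters only as the target of the contradiction, not in the estimate itself.

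The paper sidesteps this obstacle with an entirely different mechanism. Rather than peak functions and path integrals, it exploits rotational symmetry: writing $f^\delta(z)=f(ze^{i\delta})$, compactness and Proposition \ref{cptcriterion} give $\|T_g(g-g^\delta)\|_\infty\to0$, and a change of variables shows $\|T_{g^\delta}(g-g^\delta)\|_\infty\to0$ as well. Subtracting and invoking the algebraic identity $T_h h=\tfrac12 h^2$ with $h=g-g^\delta$ yields $\|g-g^\delta\|_\infty\to0$ directly---no bound on $\int|g'|$ along any path is ever needed. The passage from $\|g-g^\delta\|_\infty\to0$ to $g\in A$ is then a modulus-of-continuity fact (the boundary modulus controls the interior modulus, applied uniformly to the dilates $g_r$). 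The idea you are missing is this self-application trick $T_{g-g^\delta}(g-g^\delta)=\tfrac12(g-g^\delta)^2$, which converts the compactness hypothesis into norm information about $g$ itself and bypasses the error-term issue altogether.
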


\begin{proof}
For $f \in H^{\infty}$, $\delta \in \R$, and $z \in \D$, define
  $$f^{\delta}(z) = f(ze^{i \delta}).$$
Suppose $T_g$ is compact on $H^{\infty}$.  Since $f^{\delta}(z) \to f(z)$ uniformly on compact subsets of $\D$, Proposition \ref{cptcriterion} implies
  $$\|T_g(f - f^{\delta})\|_{\infty} \to 0 \text{ as } \delta \to 0.$$
Also,
\begin{align*}
T_{g^{\delta}}(f - f^{\delta})(z)
&= \int_0^z g'(we^{i\delta})(e^{i\delta})(f(w) - f(we^{i\delta})) \, dw\\
&= \int_0^{ze^{i\delta}} g'(u)(f(ue^{-i\delta}) - f(u)) \, du \qquad{ (u = we^{i\delta}) }\\
&= T_g(f^{-\delta} - f)(ze^{i\delta}),
\end{align*}
so
  $$\| T_{g^{\delta}}(f - f^{\delta}) \|_{\infty} \to 0 \text{ as } \delta \to 0.$$
Thus, by linearity of $T_g$ in the symbol $g$,
  $$\|T_{g - g^{\delta}} (f - f^{\delta})\|_{\infty} = \|T_g(f - f^{\delta}) - T_{g^{\delta}}(f - f^{\delta}) \|_{\infty} \to 0 \text{ as } \delta \to 0.$$
Setting $f = g$ and $h = f - f^{\delta} = g - g^{\delta}$, since
  $$ T_h h(z) =  \int_0^z h(w)h'(w)\, dw = (h(z))^2/2,$$
we obtain
  \begin{equation} \label{gdelta}
  \|g - g_{\delta}\|_{\infty} \to 0 \text{ as } \delta \to 0.
  \end{equation}

An argument involving the modulus of continuity of $g$ will complete the proof; see, eg., \cite{RST}.
For $f \in A, \delta > 0$, let
  $$\omega(\delta,f) = \sup \{ | f(z_1)-f(z_2)| : |z_1 - z_2| < \delta, \quad z_1, z_2 \in \overline{\D} \},$$
and
  $$\Tilde{\omega}(\delta,f) = \sup \{  | f(z_1)-f(z_2)| : |z_1 - z_2| < \delta, \quad z_1, z_2 \in \partial \D \}.$$

For $0 < r < 1$, denote the functions
  $$g_r(z) = g(rz).$$
Let $\eps > 0.$  (\ref{gdelta}) implies the existence of $\delta > 0$ independent of $r$ such that
  $$\Tilde{\omega} (\delta,g_r) < \eps.$$
By \cite[Theorem 1.1]{RST},
  $$\omega(\delta, g_r) \leq 3 \Tilde{\omega}(\delta, g_r) < 3 \epsilon.$$
Since this estimate holds independent of $r \in (0,1)$,  $g$ is uniformly continuous in $\D$, i.e., $g \in A$. This completes the proof that $T_o[H^\infty]\subset A$.
That the containment is proper follows from Proposition \ref{deathspiral}.
\end{proof}

We saw in Proposition \ref{gnotinA} that $T[H^{\infty}]\setminus A$ is non-empty.
This gives the next corollary.

\begin{cor} \label{BnotinTo}
$T_o[H^{\infty}] \subsetneqq T[H^{\infty}]$.
\end{cor}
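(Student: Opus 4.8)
The plan is to deduce the corollary immediately from two facts already in hand: the containment $T_o[H^\infty]\subseteq A$ established in Theorem~\ref{TosubsetA}, and the existence of a univalent symbol $g\in T[H^\infty]\setminus A$ provided by Proposition~\ref{gnotinA}. The only point needing any comment is strictness, since $T_o[H^\infty]\subseteq T[H^\infty]$ is automatic: a compact operator on $H^\infty$ is in particular bounded, so every $g$ with $T_g$ compact has $T_g$ bounded.

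For strictness I would take $g$ to be the Riemann map onto the comblike domain constructed in Proposition~\ref{gnotinA}. That proposition shows $T_g$ is bounded on $H^\infty$, so $g\in T[H^\infty]$; on the other hand $g\notin A$, so Theorem~\ref{TosubsetA} forces $g\notin T_o[H^\infty]$. Thus $g$ witnesses $T[H^\infty]\setminus T_o[H^\infty]\neq\emptyset$, and the inclusion is proper. This is exactly the one-line argument flagged by the sentence preceding the corollary.

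There is no substantive obstacle here; all the real work lies in the proof of Theorem~\ref{TosubsetA} (the modulus-of-continuity estimate, via the Rubel--Shields--Taylor-type bound $\omega(\delta,g_r)\le 3\tilde\omega(\delta,g_r)$) and in the conformal-mapping construction of Proposition~\ref{gnotinA}. I would, however, be careful to use the right witness: Proposition~\ref{deathspiral} produces an element of $A\setminus T[H^\infty]$, which addresses the \emph{other} strict containment ($T[H^\infty]\subsetneqq M[H^\infty]$) and is of no use for the present statement. The function needed here must lie in $T[H^\infty]\setminus A$, and it is precisely Proposition~\ref{gnotinA} that supplies such a function.
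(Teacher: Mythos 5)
Your argument is correct and is exactly the paper's: the corollary follows immediately from Theorem~\ref{TosubsetA} ($T_o[H^\infty]\subseteq A$) together with Proposition~\ref{gnotinA}, which supplies a $g\in T[H^\infty]\setminus A$. Your choice of witness is the right one, and your side remark that Proposition~\ref{deathspiral} would not serve here is also accurate.
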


Next we turn to the relationship between $T_o[H^{\infty}]$ and $T_o[A]$.
\begin{prop} \label{To[A]}
$T_o[H^{\infty}] \subseteq T_o[A].$
\end{prop}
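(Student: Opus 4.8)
The plan is to show that if $T_g$ is compact on $H^\infty$, then $g \in A$ and $T_g$ is compact on $A$. The first part is immediate: by Theorem~\ref{TosubsetA}, compactness of $T_g$ on $H^\infty$ forces $g \in A$, and then by Proposition~\ref{TAsubTH}, $T_g$ being bounded on $H^\infty$ together with $g \in A$ gives $g \in T[A]$, so $T_g$ at least maps $A$ boundedly into $A$. It remains to upgrade this to compactness on $A$.

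To prove compactness on $A$, I would use the criterion of Proposition~\ref{cptcriterion}, which applies to $A$ as well once we know $T_g$ is bounded on $A$ (the proof of that proposition only uses normal families and local uniform convergence, both of which are available in $A$; alternatively one can invoke the standard fact that a bounded operator $L$ on a Banach space of analytic functions is compact iff $Lf_n \to 0$ whenever $\{f_n\}$ is a bounded sequence converging to $0$ locally uniformly). So let $\{f_n\} \subseteq \overline{B} \cap A$ with $f_n \to 0$ uniformly on compact subsets of $\D$. Then $\{f_n\}$ is in particular a sequence in the closed unit ball of $H^\infty$ converging to $0$ locally uniformly, so by Proposition~\ref{cptcriterion} applied to the (assumed) compact operator $T_g$ on $H^\infty$, we get $\|T_g f_n\|_{H^\infty} \to 0$. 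But since $g \in A$ and each $f_n \in A$, Proposition~\ref{TAsubTH} gives $T_g f_n \in A$, and the sup norm on $A$ coincides with the $H^\infty$ norm. Hence $\|T_g f_n\|_{A} = \|T_g f_n\|_{\infty} \to 0$, which is exactly condition (ii) of Proposition~\ref{cptcriterion} for $T_g$ acting on $A$. Therefore $T_g$ is compact on $A$, i.e. $g \in T_o[A]$.

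I do not anticipate a serious obstacle here; the statement is essentially a bookkeeping consequence of the two earlier results (Theorem~\ref{TosubsetA} and Proposition~\ref{TAsubTH}) combined with the observation that the closed unit ball of $A$ sits inside the closed unit ball of $H^\infty$ and the norms agree. The one point that requires a word of care is justifying that the compactness criterion of Proposition~\ref{cptcriterion} is available on $A$: the cleanest route is to note that $A$, being a closed subspace of $H^\infty$, inherits the normal-family property, so the proof of Proposition~\ref{cptcriterion} goes through verbatim with $H^\infty$ replaced by $A$ (the limit function $h$ of a locally uniformly convergent subsequence lies in $A$ because $A$ is closed under local uniform limits of bounded sequences). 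With that remark in place, the argument above is complete.

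Altogether the proof is about five lines: invoke Theorem~\ref{TosubsetA} to get $g \in A$; invoke Proposition~\ref{TAsubTH} to get $T_g(A) \subseteq A$; take a bounded sequence in $A$ tending to $0$ locally uniformly, view it inside $\overline{B}$, apply Proposition~\ref{cptcriterion} for the $H^\infty$-compact operator $T_g$ to conclude $\|T_g f_n\|_\infty \to 0$; and finally apply the ($A$-version of the) criterion in the other direction to conclude $T_g$ is compact on $A$.
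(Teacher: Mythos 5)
Your overall strategy is sound and close in spirit to the paper's: the paper shows $T_g$ maps $A$ into $A$ by writing $T_gp = M_gp - S_gp - g(0)p(0)$ for polynomials $p$ and using density of polynomials in $A$, rather than citing Proposition~\ref{TAsubTH}, and then concludes compactness on $A$ from compactness on $H^\infty$ together with the fact that $A$ is a closed subspace. Your first two steps (Theorem~\ref{TosubsetA} to get $g\in A$, Proposition~\ref{TAsubTH} to get $T_g(A)\subseteq A$) are fine.

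However, your justification of the ``$A$-version'' of Proposition~\ref{cptcriterion} contains a genuine error: it is \emph{not} true that $A$ is closed under locally uniform limits of bounded sequences. If $h\in H^\infty\setminus A$, its Ces\`aro means $\sigma_n h$ are polynomials (hence in $A$), satisfy $\|\sigma_n h\|_\infty\le\|h\|_\infty$, and converge to $h$ locally uniformly. Consequently the sufficiency proof of Proposition~\ref{cptcriterion} does not transfer verbatim to $A$: given $\{h_n\}$ in the unit ball of $A$ with $h_{n_k}\to h$ locally uniformly, the differences $h_{n_k}-h$ used in that proof need not lie in $A$, so condition (ii) for sequences in $A$ cannot be applied to them. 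The conclusion you want is still true, but it needs either a modified argument (apply (ii) to the differences $h_{n_k}-h_{n_j}$, which do lie in $A$, to see that $\{T_gh_{n_k}\}$ is Cauchy in the complete space $A$) or, much more simply, no criterion at all: once you know $T_g(A)\subseteq A$, the restriction of the compact operator $T_g\colon H^\infty\to H^\infty$ to the closed invariant subspace $A$ is automatically compact, since $T_g$ maps the unit ball of $A$ into a relatively compact subset of $H^\infty$ whose limit points, being limits of elements of the closed set $A$, lie in $A$. With that replacement your proof is correct and essentially equivalent to the paper's.
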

\begin{proof}
Suppose $g \in T_o[H^{\infty}]$.
For any polynomial $p$,
  $$S_gp (z)= \int_0^zg(t)p'(t) \, dt \in A,$$
since $p'g\in H^\infty$.  Also,
$g \in A$ by Theorem \ref{TosubsetA}, and so $M_g p\in A$.  Hence,    $T_gp = M_gp - S_gp-g(0)p(0)\in A.$  Since the polynomials are dense in the closed subspace
$A$ of $H^\infty$, and $T_g$ is bounded on $H^\infty$, it follows that $T_g$ maps $A$ to $A$
and is  compact on $A$.
\end{proof}

\begin{cor}\label{T_otoA}
If $g \in T_o[H^{\infty}]$, then  $T_g : H^{\infty} \to A$.
\end{cor}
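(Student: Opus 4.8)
The plan is to deduce this corollary directly from Proposition \ref{To[A]} together with Theorem \ref{TosubsetA}. Since $g \in T_o[H^\infty]$, Theorem \ref{TosubsetA} gives $g \in A$, and Proposition \ref{To[A]} gives $g \in T_o[A]$; in particular, the argument in the proof of Proposition \ref{To[A]} already shows that $T_g$ maps $A$ into $A$. So the only additional point is to upgrade this to: $T_g$ maps all of $H^\infty$ into $A$.

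The key step is a density-plus-uniform-estimate argument. Given $f \in H^\infty$, I would approximate it by the dilates $f_r(z) = f(rz)$ for $0<r<1$. Each $f_r$ extends analytically across $\partial\D$, hence $f_r \in A$, so by Proposition \ref{To[A]} (or its proof) $T_g f_r \in A$. Since $T_g$ is bounded on $H^\infty$ (as $g \in T_o[H^\infty] \subseteq T[H^\infty]$) and $\|f_r\|_\infty \le \|f\|_\infty$, I have $\|T_g f - T_g f_r\|_\infty \le \|T_g\|_{H^\infty}\|f - f_r\|_\infty$. The subtlety is that $f_r \to f$ in $H^\infty$ norm only when $f \in A$, not for general $f \in H^\infty$, so this particular approximation does not immediately close. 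The clean fix is to invoke compactness instead: the family $\{f_r : 1/2 < r < 1\}$ lies in a bounded subset of $H^\infty$ and $f_r \to f$ locally uniformly on $\D$, so $f_r - f \to 0$ locally uniformly; by Proposition \ref{cptcriterion}(ii) applied with $L = T_g$ (using that $T_g$ is compact on $H^\infty$), $\|T_g f_r - T_g f\|_\infty \to 0$ as $r \to 1^-$. Thus $T_g f$ is a uniform limit of the functions $T_g f_r \in A$, and since $A$ is closed in $H^\infty$, $T_g f \in A$.

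I expect no real obstacle here; the corollary is essentially a repackaging of the proof of Proposition \ref{To[A]}, the only care needed being to use the compactness criterion (Proposition \ref{cptcriterion}) rather than norm convergence of dilates, since a general $f \in H^\infty$ need not be approximable in norm by elements of $A$. Here is the proof.

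\medskip

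\noindent\emph{Proof of Corollary \ref{T_otoA}.} Let $g \in T_o[H^\infty]$ and let $f \in H^\infty$. By Theorem \ref{TosubsetA}, $g \in A$. For $0 < r < 1$, set $f_r(z) = f(rz)$; then $f_r$ extends to be analytic on a neighborhood of $\overline{\D}$, so $f_r \in A$. As in the proof of Proposition \ref{To[A]}, $S_g f_r \in A$ since $f_r' g \in H^\infty$, and $M_g f_r \in A$ since $g, f_r \in A$; hence $T_g f_r = M_g f_r - S_g f_r - g(0)f(0) \in A$. Now $\{f_r : 1/2 < r < 1\}$ is contained in a bounded subset of $H^\infty$ and $f_r \to f$ uniformly on compact subsets of $\D$ as $r \to 1^-$, so $f_r - f \to 0$ locally uniformly in $\D$. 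Since $T_g$ is compact on $H^\infty$, Proposition \ref{cptcriterion} gives $\|T_g f_r - T_g f\|_\infty \to 0$ as $r \to 1^-$. Thus $T_g f$ is a uniform limit of functions in $A$, and since $A$ is a closed subspace of $H^\infty$, $T_g f \in A$. As $f \in H^\infty$ was arbitrary, $T_g : H^\infty \to A$. $\qed$
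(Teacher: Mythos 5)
Your proof is correct and follows essentially the same route as the paper: approximate $f\in H^\infty$ by dilates $f_r\in A$, observe $T_gf_r\in A$, and use compactness of $T_g$ to upgrade the locally uniform convergence $f_r\to f$ to norm convergence of the images, concluding since $A$ is closed. The only (harmless) difference is that the paper extracts a subsequence converging in $A$ via compactness of $T_g$ on $A$ and identifies the limit pointwise by bounded convergence, whereas you invoke Proposition \ref{cptcriterion} on $H^\infty$ to get $\|T_gf_r-T_gf\|_\infty\to 0$ directly (after the trivial normalization needed to place $f_r-f$ in the unit ball).
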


\begin{proof}
Let $g \in T_o[H^{\infty}]$.  
For  $f \in H^{\infty}$ define
$$\displaystyle{f_n(z) = f \left( \left (1 - \frac{1}{n} \right )z \right)}, \;\;n \geq 1.$$
Clearly $f_n \in A$,  $||f_n||_A \leq ||f||_{H^{\infty}}$  for  $n \geq 1$,  and $f_n \rightarrow f$
locally uniformly on $\D$ as $n \rightarrow \infty$.  Since $T_g$ is compact on $A$ (by Proposition \ref{To[A]}), there exists a subsequence $\{f_{n_k}\}$ of $\{f_n\}$  such that $T_g(f_{n_k}) \rightarrow h$ in $A$.  It follows that $T_g(f_{n_k}) \rightarrow h$  pointwise.

For fixed $z \in \D$ we have
$$\lim_{n \rightarrow \infty} T_g f_n (z)  = \lim_{n \rightarrow \infty} \int_0^z g'(t)f_n(t) \; dt = \int_0^z g'(t)f(t) \; dt = T_g f(z).$$
Therefore $T_g f (z) = h(z)$, and hence $T_g f \in A$.
\end{proof}

\section{Problems}

In this section we collect some problems left unresolved in our work:

\begin{prob} \label{BddQ}
Give a function theoretic characterization of $T[H^{\infty}]$.
\end{prob}

From Proposition \ref{ubrv}, we know $\rm{BRV}\subseteq T[H^{\infty}]$.  This leads to the
natural question:
\begin{quote}
{\em Is $\rm{BRV}= T[H^{\infty}]$?}
\end{quote}

Next, we present a problem in geometric function theory that is a version of
 Problem \ref{BddQ} for univalent functions.  For a simply connected domain
 $\Omega$ and $w_0\in\Omega$,
let $J_{w_0}$ be the
integration operator  defined in (\ref{IntOpJ}).  Clearly, if $J_{w_0}$ is bounded on
$H^\infty(\Omega)$, then  $J_{w_1}$ is also bounded for all $w_1\in\Omega$.

\begin{prob}
Give a geometric characterization of simply connected domains $\Omega$ such that $J_{w_0}$
is bounded (compact) on $H^\infty(\Omega)$.
\end{prob}

The question given after Problem \ref{BddQ} can be formuated in this
setting.
Define the {\em arc-length distance} between the points
$z$ and $w$ in a domain $\Omega$ in the plane to be the infimum of the arc-lengths of rectifiable curves in $\Omega$ that connect $z$ to $w$.  The {\em arc-length diameter} of $\Omega$ is the supremum
of the arc-length distances between points in $\Omega$.  Using Theorem \ref{hypgeo} we see that
the image of $\D$ under a univalent map $g$ has finite arc-length diameter if and only
if $g\in \rm{BRV}$, and recall from Proposition \ref{univalent} that $g\in T[H^\infty]$ if and
only if $J_{g(0)}$ is bounded on $H^\infty(\Omega)$.  Thus we are led to the question:

\begin{quote}
{\em
Is the operator $J_{w_0}$ bounded on $H^\infty(\Omega)$ if and only if
$\Omega$ has finite arc-length diameter?}
\end{quote}

The next problem concerns Blaschke products.
We saw that uniformly Frostman Blaschke products are in BRV.  It is natural
to ask if there are other infinite Blaschke products, or families of Blaschke products,
that belong to this class.

\begin{prob}
Characterize, in terms of their zero sequences,  Blaschke  products in BRV.
\end{prob}

A possible candidate for a Blaschke product in BRV is the Blaschke product with zeros $\{1 - 1/k^2 \}$.
It is not hard to see that the image of every radius is rectifiable, but it is not clear that there is a
uniform bound for their arc-lengths.

We end with the problem of characterizing when $T_g$ is compact.

\begin{prob} \label{cptprob}
Give function theoretic characterizations of $T_o[H^{\infty}]$ and $T_o[A]$.
\end{prob}

We know  that $\mathcal U \subseteq T_o[H^{\infty}]\subseteq T_o[A] \subseteq T[A]\subsetneqq A$,
but we do not know if the first three inclusions are proper.

 \end{document}